\newtheorem{theorem}{Theorem}[section]
\newtheorem{lemma}[theorem]{Lemma}
\newtheorem{cor}[theorem]{cor}
\newtheorem{prop}[theorem]{Proposition}
\theoremstyle{definition}
\newtheorem{definition}[theorem]{Definition}
\theoremstyle{definition}
\newtheorem{remark}[theorem]{Remark}
\theoremstyle{definition}
\theoremstyle{definition}
\newtheorem{example}[theorem]{Example}
\begin{document}


	\title[vN Reg. $\mathcal{C}^{\infty}-$Rings, Boolean alg.]{Classification of Boolean Algebras through  von Neumann regular $\mathcal{C}^{\infty}-$Rings}

	
	\author[J. Berni, H. Mariano]{Jean Cerqueira Berni$^1$ and Hugo Luiz Mariano$^2$}

	\address{$^{1}$ Department of Mathematics, S\~{a}o Paulo State University - UNESP, 13506-900 , S\~{a}o Paulo, Brazil.}
	
	\email{j.berni@unesp.br}
	
	\address{$^{2}$ Department of Mathematics, University of S\~{a}o Paulo, 05508-090, S\~{a}o Paulo, Brazil.}
	
	\email{hugomar@ime.usp.br}
	
	\subjclass[2020]{13B30, 20A05, 16S99, 16B50, 06E99.}
	
	
	\keywords{$\mathcal{C}^{\infty}-$Rings, Von Neumann Regular Rings, Stone duality, Boolean algebras.}

	\begin{abstract}
		{In this paper, we introduce the concept of a "von Neumann regular $\mathcal{C}^{\infty}$-ring", which is a model for a specific equational theory. We delve into the characteristics of these rings and demonstrate that each Boolean space can be effectively represented as the image of a von Neumann regular $\mathcal{C}^{\infty}$-ring through a specific functor. Additionally, we establish that every homomorphism between Boolean algebras can be expressed through a $\mathcal{C}^{\infty}$-ring homomorphism between von Neumann regular $\mathcal{C}^{\infty}$-rings.
		}
	\end{abstract}
	
	\maketitle

	\section{Introduction}
	
	As per I. Moerdijk and G. Reyes, $\mathcal{C}^{\infty}$-rings were primarily introduced for their applications in Singularity Theory, as  Weil algebras and jet spaces, and for constructing topos-models for Synthetic Differential Geometry. A remarkable topos is the one of sheaves over the small site comprising the category of all germ-determined $\mathcal{C}^{\infty}$-rings, along with an appropriate Grothendieck topology (for detailed information, refer to \cite{moerdijk1991models}).
	
	In recent years, the authors have been focused  on exploring $\mathcal{C}^{\infty}$-rings and their applications, cf. \cite{berni2018thesis}, \cite{berni2018classifying}, \cite{berni2019topics}, \cite{berni2019neumann}, \cite{berni2020boletin}, \cite{berni2022universitaria}, \cite{berni2022universal}, \cite{berni2022order} and \cite{berni2023separation}. In this study, we present a valuable concept of a von Neumann regular $\mathcal{C}^{\infty}$-ring, showcasing pertinent results and detailing both their properties and those of the category they form.

	As an application, we establish that every Boolean algebra can be canonically represented through a von Neumann regular $\mathcal{C}^{\infty}$-ring. Additionally, we demonstrate that each homomorphism between Boolean algebras can be depicted by a specific $\mathcal{C}^{\infty}$-ring homomorphism between von Neumann regular $\mathcal{C}^{\infty}$-rings. Specifically, the investigation of von Neumann regular $\mathcal{C}^{\infty}$-rings extends to the classification of Boolean spaces (and/or Boolean algebras) and their morphisms. A critical tool in developing these findings is the alignment between the concepts of Boolean spaces and "profinite spaces." Profinite spaces, along with their morphisms, constitute a category denoted as ${\rm \bf Profinite}$, forming the basis for the notion of a "condensed set." A condensed set is characterized as a functor (a specific sheaf) denoted by $C: {\rm \bf Profinite}^{\rm op} \to {\rm \bf Set}$. These condensed sets, in turn, represent a fundamental concept that plays a pivotal role in the recent theory of "Condensed Mathematics" developed by Dustin Clausen and Peter Scholze.
	


	\subsection*{Overview of the Paper:} In {\bf Section 2}, we outline the fundamental preliminary concepts of $\mathcal{C}^{\infty}$-rings, delving into their universal algebra and their 'smooth commutative algebra.'
	
	Moving on to {\bf Section 3}, we introduce the concept of a von Neumann regular $\mathcal{C}^{\infty}$-ring, essentially a $\mathcal{C}^{\infty}$-ring with an underlying commutative unital ring being a von Neumann regular ring. We then investigate the main properties and characterizations associated with this concept.
	
	
	In \textbf{Section 4}, we explore their practical applications. Firstly, we establish that each continuous map between profinite spaces can be accurately portrayed by a continuous map between spectral spaces, which are canonically associated with a $\mathcal{C}^{\infty}$-ring homomorphism between von Neumann regular $\mathcal{C}^{\infty}$-rings (see \textbf{Theorem \ref{boospvn}}). Subsequently, we deduce that every homomorphism between Boolean algebras can be distinctly represented through a $\mathcal{C}^{\infty}$-ring homomorphism between von Neumann regular $\mathcal{C}^{\infty}$-rings (refer to \textbf{Theorem \ref{qmq}}).

	\section{Preliminaries on $\mathcal{C}^{\infty}$-rings}
	
	In this section we provide the main preliminary notions on   $\mathcal{C}^{\infty}-$rings, with respect to their universal algebra (cf. \cite{berni2022universal}) and to their ``smooth commutative algebra" (cf. \cite{berni2019neumann}).
	
	In order to formulate and study the concept of $\mathcal{C}^{\infty}-$ring, we use a first order language, $\mathcal{L}$, with a denumerable set of variables (${\rm Var}(\mathcal{L}) = \{ x_1, x_2, \cdots, x_n, \cdots\}$), whose nonlogical symbols are the symbols of $\mathcal{C}^{\infty}-$\-functions from $\mathbb{R}^m$ to $\mathbb{R}^n$, with $m,n \in \mathbb{N}$, \textit{i.e.}, the non-logical symbols consist only of function symbols: for each $n \in \mathbb{N}$, the $n-$ary \textbf{function symbols} of the set $\mathcal{C}^{\infty}(\mathbb{R}^n, \mathbb{R})$, \textit{i.e.}, $\mathcal{F}_{(n)} = \{ f^{(n)} \mid f \in \mathcal{C}^{\infty}(\mathbb{R}^n, \mathbb{R})\}$. Thus, the set of function symbols of our language is given by:
	$$\mathcal{F} = \bigcup_{n \in \mathbb{N}} \mathcal{F}_{(n)} = \bigcup_{n \in \mathbb{N}} \mathcal{C}^{\infty}(\mathbb{R}^n).$$
	Note that our set of constants is identified with the set of all $0-$ary function symbols, \textit{i.e.}, $\mathcal{C} = \mathcal{F}_{(0)} = \mathcal{C}^{\infty}(\mathbb{R}^0) \cong \mathcal{C}^{\infty}(\{ *\})$.
	
	The terms of this language are defined, in the usual way, as the smallest set which comprises the individual variables, constant symbols and $n-$ary function symbols followed by $n$ terms ($n \in \mathbb{N}$).
	
	Functorially, a (set-theoretic) $\mathcal{C}^{\infty}-$ring is a finite product preserving functor from the category $\mathcal{C}^{\infty}$, whose objects are of the form $\mathbb{R}^n$, $n \in \mathbb{N}$, and whose morphisms are the smooth functions between them, \textit{i.e.}, a finite product preserving functor:
	$$A: \mathcal{C}^{\infty} \rightarrow {\rm \bf Set}$$
	
	Apart from the functorial definition and the ``first-order language'' definition we just gave, there are many equivalent descriptions. We focus, first, on the universal-algebraic description of a $\mathcal{C}^{\infty}-$ring in ${\rm \bf Set}$, given in the following:
	
	\begin{definition}\label{cabala} A \textbf{$\mathcal{C}^{\infty}-$structure}  is a pair $ \mathfrak{A} =(A,\Phi)$, in which $A$ is a non-empty set and:
		
		$$\begin{array}{cccc}
			\Phi: & \bigcup_{n \in \mathbb{N}} \mathcal{C}^{\infty}(\mathbb{R}^n, \mathbb{R})& \rightarrow & \bigcup_{n \in \mathbb{N}} {\rm Func}\,(A^n; A)\\
			& (f: \mathbb{R}^n \stackrel{\mathcal{C}^{\infty}}{\to} \mathbb{R}) & \mapsto & \Phi(f) := (f^{A}: A^n \to A)
		\end{array},$$
		
		\noindent is a function, that is, $\Phi$ interprets the \textbf{symbols}\footnote{here considered simply as syntactic symbols rather than functions.} of all smooth real functions of $n$ variables as $n-$ary function symbols on $A$.
	\end{definition}
	
	We call a $\mathcal{C}^{\infty}-$struture, $\mathfrak{A} = (A, \Phi)$, a \textbf{$\mathcal{C}^{\infty}-$ring}, whenever it preserves projections and all equations between smooth functions. More precisely, we have the following:
	
	\begin{definition}\label{CravoeCanela}Let $\mathfrak{A}=(A,\Phi)$ be a $\mathcal{C}^{\infty}-$structure. We say that $\mathfrak{A}$ (or, when there is no danger of confusion, $A$) is a \textbf{$\mathcal{C}^{\infty}-$ring} if the following is true:
		\begin{itemize}
			\item{ Given any $n,k \in \mathbb{N}$ and any projection $\pi_k: \mathbb{R}^n \to \mathbb{R}$, we have:
				
				$$(\forall x_1)\cdots (\forall x_n)(\Phi(\pi_k)(x_1, \cdots, x_n)=x_k).$$} 
			\item{For every $f, g_1, \cdots g_n \in \mathcal{C}^{\infty}(\mathbb{R}^m, \mathbb{R})$ with $m,n \in \mathbb{N}$, and every $h \in \mathcal{C}^{\infty}(\mathbb{R}^n, \mathbb{R})$ such that $f = h \circ (g_1, \cdots, g_n)$, one has:
				$$\Phi(f)(\vec{x})=\Phi(h)(\Phi(g_1)(\vec{x}), \cdots, \Phi(g_n)(\vec{x}))$$}
		\end{itemize}
		
	\end{definition}
	
	\begin{definition}Let $(A, \Phi)$ and $(B,\Psi)$ be two $\mathcal{C}^{\infty}-$rings. A function $\varphi: A \to B$ is called a \textbf{morphism of $\mathcal{C}^{\infty}-$rings} or \textbf{$\mathcal{C}^{\infty}$-homomor\-phism} if for any $n \in \mathbb{N}$ and any $f: \mathbb{R}^n \stackrel{\mathcal{C}^{\infty}}{\to} \mathbb{R}$, one has $\Psi(f)\circ \varphi^{(n)} = \varphi \circ \Phi(f)$, where $\varphi^{(n)} = (\varphi, \cdots, \varphi): A^n \to B^n$.
	\end{definition}

	\begin{remark} {\bf (on universal algebraic constructions)}
		It is not difficult to see that $\mathcal{C}^{\infty}-$structures, together with their morphisms (which we call $\mathcal{C}^{\infty}-$morphisms) compose a category, that we denote by $\mathcal{C}^{\infty}{\rm \bf Str}$, and that $\mathcal{C}^{\infty}-$rings, together with all the $\mathcal{C}^{\infty}-$morphisms between $\mathcal{C}^{\infty}-$\-rings (which we call $\mathcal{C}^{\infty}-$homomorphisms) compose a full subcategory, $\mathcal{C}^{\infty}{\rm \bf Ring}$. In particular, since $\mathcal{C}^{\infty}{\rm \bf Ring}$ is a ``variety of algebras'', \emph{i.e.} it is a class of $\mathcal{C}^{\infty}-$structures which satisfies a given set of equations, (or equivalently,  by \textbf{Birkhoff's HSP Theorem}) it is closed under substructures, homomorphic images and products. Moreover:\\
		$\bullet$ $\mathcal{C}^{\infty}{\rm \bf Ring}$ is a concrete category and the forgetful functor, $U: \mathcal{C}^{\infty}{\rm \bf Ring} \to {\rm \bf Set}$,
		creates directed inductive colimits;\\
		$\bullet$ Each set $X$ freely generates a $\mathcal{C}^{\infty}$-ring, $\mathcal{C}^{\infty}(\mathbb{R}^X)$. In particular, the free $\mathcal{C}^{\infty}$-ring on $n$ generators is (isomorphic to) $\mathcal{C}^{\infty}(\mathbb{R}^n)$, $n \in \mathbb{N}$. Moreover, $\mathcal{C}^{\infty}(\mathbb{R}^X) \cong \varinjlim_{X' \subset_{\rm fin} X} \mathcal{C}^{\infty}(\mathbb{R}^{X'})$ ;\\
		$\bullet$ Every $\mathcal{C}^{\infty}-$ring is the homomorphic image of some free $\mathcal{C}^{\infty}-$ring determined by some set, being isomorphic to the quotient of a free $\mathcal{C}^{\infty}-$ring by some congruence;\\
		$\bullet$ The congruences of $\mathcal{C}^{\infty}-$rings are classified by their ``ring-theoretical'' ideals - which are the ideals of a $\mathcal{C}^{\infty}-$ring, or the ``$\mathcal{C}^{\infty}-$ideals'';\\
		$\bullet$ In $\mathcal{C}^{\infty}{\rm \bf Ring}$ one defines ``the $\mathcal{C}^{\infty}-$coproduct'' between two $\mathcal{C}^{\infty}-$rings $\mathfrak{A} = (A,\Phi)$ and $\mathfrak{B}=(B,\Psi)$, denoted by $A \otimes_{\infty} B$;\\
		$\bullet$ Using free $\mathcal{C}^{\infty}-$rings and the $\mathcal{C}^{\infty}-$coproduct, one gets the ``$\mathcal{C}^{\infty}-$ring of polynomials'' on any set $S$ of variables with coefficients in $A$, given by $A\{ x_s \mid s \in S\} = A\otimes_{\infty}\mathcal{C}^{\infty}(\mathbb{R}^S)$.
	\end{remark} 
	
	\begin{remark}({\bf on smooth commutative algebra}). Every $\mathcal{C}^{\infty}-$ring has an underlying commutative unital ring, so there is a naturally defined forgetful functor $\widetilde{U}: \mathcal{C}^{\infty}{\rm \bf Ring} \to {\rm \bf CRing}$. Using such forgetful functor, one defines a $\mathcal{C}^{\infty}-$field (resp. $\mathcal{C}^{\infty}-$domain, local $\mathcal{C}^{\infty}-$ring) as a $\mathcal{C}^{\infty}-$ring $\mathfrak{A} = (A,\Phi)$ such that $\widetilde{U}(\mathfrak{A})$ is a field (resp. domain, local ring), when regarded as a commutative unital ring;\\
		$\bullet$ In $\mathcal{C}^{\infty}{\rm \bf Ring}$ one has the $\mathcal{C}^{\infty}-$ring of fractions of a $\mathcal{C}^{\infty}-$ring $A$ with respect to any subset $S$ of $A$, denoted by $A \stackrel{\eta_S}{\rightarrow} A\{ S^{-1}\}$, in the same sense one has the ring of fractions with respect to a subset of a commutative unital ring, defined by the following two properties: (i) given any $a \in S$, $\eta_S(a) \in A^{\times}$ and (ii) given any $\mathcal{C}^{\infty}-$ring $B$ and any $\mathcal{C}^{\infty}-$homomorphism $f: A \to B$ such that $(\forall a \in S)(f(a)\in B^{\times})$ there is a unique $\mathcal{C}^{\infty}-$homo\-morphism $\widetilde{f}: A\{ S^{-1}\} \to B$ such that $\widetilde{f}\circ \eta_S = f$.\\
		$\bullet$ The $\mathcal{C}^{\infty}-$ring of fractions can be constructed using universal algebraic tools, and it is given by the quotient $A\{ S^{-1}\} \cong A\{ x_s \mid s \in S\}/{\langle \{x_s \cdot s - 1 \mid s \in S\}\rangle}$.\\
		$\bullet$ I. Moerdijk and G. Reyes introduced the notion of the $\mathcal{C}^{\infty}-$radical of an ideal $I$ of a $\mathcal{C}^{\infty}-$ring $\mathfrak{A} = (A,\Phi)$ (thus, a ring-theoretical ideal) as the set:
		$$\sqrt[\infty]{I} = \{ a \in A \mid ( A/I)\{ {a+I}^{-1}\} \cong \{ 0\} \}$$
		$\bullet$ The $\mathcal{C}^{\infty}-$spectrum of a $\mathcal{C}^{\infty}-$ring $A$ is the topological space whose underlying set is $X = \{ \mathfrak{p} \subseteq A \mid (\mathfrak{p}\,\, \text{is a prime ideal})\& (\sqrt[\infty]{\mathfrak{p}} = \mathfrak{p})\}$ and whose topology is generated by $\mathcal{B} =\{ D^{\infty}(a) \mid a \in A\}$, where $D^{\infty}(a) = \{ \mathfrak{p} \in X \mid a \notin \mathfrak{p} \}$. Moreover, $\mathcal{B}$ is closed under finite intersections and arbitrary reunions. We denote this topological space by ${\rm Spec}^{\infty}(A)$. Sometimes, when there is no danger of confusion, we write ${\rm Spec}^{\infty}(A)$ to denote the underlying set to this topological space, instead of $X$;\\
		$\bullet$ The $\mathcal{C}^{\infty}-$radical of a $\mathcal{C}^{\infty}-$ideal $I$ of a $\mathcal{C}^{\infty}-$ring $A$ is characterised by:
		$$\sqrt[\infty]{I} = \bigcap \{ \mathfrak{p} \in {\rm Spec}^{\infty}\,(A) \mid I \subseteq \mathfrak{p}\}$$
		$\bullet$ There is an \textit{ad hoc} definition of saturation for $\mathcal{C}^{\infty}-$rings, the smooth saturation of a subset $S$ of a $\mathcal{C}^{\infty}-$ring $A$, given by $S^{\infty-{\rm sat}} = \{ a \in A \mid \eta_S(a) \in A\{ S^{-1}\}^{\times}\}$. The smooth saturation is related to the $\mathcal{C}^{\infty}-$radical of an ideal $I \subset A$ by $\sqrt[\infty]{I} = \{ a \in A \mid I \cap \{ a\}^{\infty-{\rm sat}} \neq \varnothing\}$;\\
		$\bullet$ Along with the notion of a $\mathcal{C}^{\infty}-$radical ideal we have the concept of a reduced $\mathcal{C}^{\infty}-$ring, which is a $\mathcal{C}^{\infty}-$ring $\mathfrak{A}=(A,\Phi)$ such that $\sqrt[\infty]{(0_{A})} = (0_A)$.\\
		$\bullet$ A $\mathcal{C}^{\infty}-$ringed space is a pair $(X, \mathcal{O}_X)$, where $X$ is a topological space and $\mathcal{O}_X: {\rm Open}\,(X)^{\rm op} \to \mathcal{C}^{\infty}{\rm \bf Ring}$ is a sheaf. A morphism of $\mathcal{C}^{\infty}-$ringed spaces is a pair $(f,f^{\sharp}): (X,\mathcal{O}_X) \to (Y, \mathcal{O}_Y)$ of $\mathcal{C}^{\infty}-$ringed spaces is a continuous map $f: X \to Y$ and a morphism of sheaves $f^{\sharp}: f^{\dashv}[\mathcal{O}_Y] \to \mathcal{O}_X$, where $f^{\dashv}[\mathcal{O}_Y]$ is described in \textbf{Definition 4.5} of \cite{joyce2019algebraic}.
	\end{remark}
	
	\section{On von Neumann regular $\mathcal{C}^{\infty}-$rings}

	We begin by registering a fact that is valid for any $\mathcal{C}^{\infty}-$ring regarding to its idempotents and localizations:
	
	\begin{lemma}\label{destino}Let $A$ be \underline{any} $\mathcal{C}^{\infty}-$ring and $e \in A$ an idempotent element. There are unique isomorphisms:
		$$A\{ e^{-1}\} \cong A/(1-e) \cong A \cdot e := \{ a \cdot e \mid a \in A\}$$
	\end{lemma}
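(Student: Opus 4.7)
The plan is to prove the two isomorphisms separately — the first via the universal property of the $\mathcal{C}^{\infty}$-ring of fractions, and the second via the kernel of the multiplication-by-$e$ map. Both arguments are essentially computations with the identities $e^2 = e$ and $e(1-e) = 0$ that go through as in classical commutative algebra, the only real subtlety being the transfer of $\mathcal{C}^{\infty}$-structure onto $A \cdot e$.

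For $A\{e^{-1}\} \cong A/(1-e)$: in the quotient one has $e \equiv 1 \pmod{(1-e)}$, so the image of $e$ is the multiplicative identity, hence in particular a unit. By the universal property of $\eta_{\{e\}}: A \to A\{e^{-1}\}$ recalled in the preliminaries, the projection $\pi: A \twoheadrightarrow A/(1-e)$ factors through a unique $\mathcal{C}^{\infty}$-homomorphism $\alpha: A\{e^{-1}\} \to A/(1-e)$. Conversely, the identity $e(1-e) = 0$ in $A$ forces, inside $A\{e^{-1}\}$, the equality $\eta_{\{e\}}(1-e) = \eta_{\{e\}}(e(1-e)) \cdot \eta_{\{e\}}(e)^{-1} = 0$, so the (ring-theoretic, hence $\mathcal{C}^{\infty}$-) ideal $(1-e)$ lies in $\ker \eta_{\{e\}}$, inducing a unique $\mathcal{C}^{\infty}$-homomorphism $\beta: A/(1-e) \to A\{e^{-1}\}$. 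Universality forces both composites to be identities.

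For $A/(1-e) \cong A \cdot e$: the set $A \cdot e$ is a commutative unital ring with identity $e$, and the map $\mu_e: A \to A \cdot e$, $a \mapsto a \cdot e$, is a surjective unital ring homomorphism. Its kernel equals the ideal $(1-e) \subseteq A$, since $ae = 0$ implies $a = a - ae = a(1-e) \in (1-e)A$, and the reverse inclusion $(1-e)A \subseteq \ker \mu_e$ is immediate from $e(1-e) = 0$. This already yields a unital ring isomorphism $A/(1-e) \cong A \cdot e$. The main point that needs care — and essentially the only one — is that $A \cdot e$ does not a priori come equipped with a $\mathcal{C}^{\infty}$-structure; one transports the $\mathcal{C}^{\infty}$-structure from the quotient $A/(1-e)$ along the ring bijection, thereby endowing $A \cdot e$ with the unique $\mathcal{C}^{\infty}$-structure making $\mu_e$ a $\mathcal{C}^{\infty}$-homomorphism. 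Uniqueness of both displayed isomorphisms then follows from the respective universal properties (or equivalently, from the surjectivity of $\pi$ and $\mu_e$, which pins down any $\mathcal{C}^{\infty}$-homomorphism out of their targets).
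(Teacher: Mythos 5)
Your proof is correct and complete. The paper itself gives no argument for this lemma---it simply defers to \textbf{Lemma 1} of \cite{berni2019neumann}---and your two-step argument (the universal property of $\eta_{\{e\}}: A \to A\{e^{-1}\}$ played against the quotient map $A \twoheadrightarrow A/(1-e)$, followed by the kernel computation for $a \mapsto a\cdot e$) is exactly the standard argument that reference carries out; in particular you correctly isolate the one genuine subtlety, namely that $A\cdot e$ is not a $\mathcal{C}^{\infty}$-subring of $A$ and must receive its $\mathcal{C}^{\infty}$-structure by transport along the ring bijection with $A/(1-e)$.
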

	\begin{proof}
		It is straightforward, A detailed proof is given in \textbf{Lemma 1} of \cite{berni2019neumann}.
	\end{proof}

	Next we give a precise definition of a von Neumann regular $\mathcal{C}^{\infty}-$ring. Loosely speaking, it is a $\mathcal{C}^{\infty}-$ring $(A,\Phi)$ such that $\widetilde{U}(A,\Phi)$ is a von Neumann regular commutative unital ring.

	\begin{definition}\label{arnaldo}Let $\mathfrak{A}=(A,\Phi)$ be a $\mathcal{C}^{\infty}-$ring. We say that $\mathfrak{A}$ is a \index{von Neumann regular $\mathcal{C}^{\infty}-$ring}\textbf{von Neumann regular $\mathcal{C}^{\infty}-$ring} if one (and thus all) of the following equivalent\footnote{See, \textbf{Proposition 3.9}, given later.},  conditions is satisfied:
		\begin{itemize}
			\item[(i)]{$(\forall a \in A)(\exists x \in A)(a = a^2x)$;}
			\item[(ii)]{Every principal ideal of $A$ is generated by an idempotent element, \textit{i.e.},
				$$(\forall a \in A)(\exists e \in A)(\exists y \in A)(\exists z \in A)((e^2=e)\& (ey=a) \& (az = e))$$}
			\item[(iii)]{$(\forall a \in A)(\exists ! b \in A)((a=a^2b)\& (b = b^2a))$}
		\end{itemize}
	\end{definition}
	
	\begin{example}Consider the set $\mathbb{R}^m = \mathcal{C}^{\infty}\,(\{*\})\times \cdots \times \mathcal{C}^{\infty}\,(\{*\})$, together with the function:
		
		$$\begin{array}{cccc}
			\Phi^{(m)}: & \bigcup_{n \in \mathbb{N}}\mathcal{C}^{\infty}(\mathbb{R}^n, \mathbb{R}) & \rightarrow & \bigcup_{n \in \mathbb{N}}{\rm Func}\,((\mathbb{R}^m)^n, (\mathbb{R}^m))\\
			& \mathbb{R}^n  \stackrel{f}{\rightarrow} \mathbb{R} & \mapsto & \mathbb{R}^m \times \cdots \times \mathbb{R}^m \stackrel{\Phi^{(m)}(f)}{\rightarrow} \mathbb{R}^m
		\end{array}$$
		
		\noindent with:
		
		$$\begin{array}{cccc}
			\Phi^{(m)}(f): & (\mathbb{R}^m)^n & \rightarrow & \mathbb{R}^m \\
			&((x_j^1)_{j=1}^{m}, \cdots, (x_j^n)_{j=1}^{m}) & \mapsto & (f((x_1^i)_{i=1}^{n}), \cdots, f((x_m^i)_{i=1}^{n}))
		\end{array}$$
		
		Therefore $\mathfrak{A} = (\mathbb{R}^m, \Phi^{(m)})$ is the product $\mathcal{C}^{\infty}-$ring. In this $\mathcal{C}^{\infty}-$ring we have, in particular, the following binary operation:
		
		$$\begin{array}{cccc}
			\Phi^{(m)}(\cdot ): & \mathbb{R}^m \times \mathbb{R}^m & \rightarrow & \mathbb{R}^m \\
			&((x_j)_{1\leq j \leq m}, (y_j)_{1\leq j \leq m}) & \mapsto & (x_1\cdot y_1, \cdots, x_m \cdot y_m)
		\end{array}$$
		
		\noindent so we write $(x_1, \cdots, x_m) \cdot  (y_1, \cdots, y_m) = (x_1\cdot y_1, \cdots, x_m \cdot y_m)$.
		
		
			
			We claim that $\mathfrak{A} = (\mathbb{R}^m, \Phi^{(m)})$ is a von Neumann-regular $\mathcal{C}^{\infty}-$ring. In fact, given any $(a_1, \cdots, a_m)\in\mathbb{R}^m$, then for each $i \in \{1,\cdots, m\}$ such that $a_i \neq 0$, we take $x_i=a_i^{-1}$, and for each $i$ such that $a_i =0$,  we take $x_i = 0$. The element $x= (x_i)_{i=1}^{m} \in \mathbb{R}^m$ is such that:
			
			$$(a_1, \cdots, a_m) = (a_1^2, \cdots, a_m^2)\cdot (x_1, \cdots, x_m).$$
			
			Thus, $\mathfrak{A} = (\mathbb{R}^m, \Phi^{(m)})$ is a von Neumann-regular $\mathcal{C}^{\infty}-$ring.
		\end{example}
		
		\begin{remark}Observe that the construction given in the example above can be replicated by replacing $\mathbb{R}$ by any $\mathcal{C}^{\infty}-$field.    
		\end{remark}

		\begin{remark} 
			Let $\mathbb{T}^{'}$ be the a theory in the language $\mathcal{L}' = \mathcal{L} \cup \{ *\}$, where $*$ is an 1-ary function symbol, which contains:\\
			
			$\bullet$ the (equational) $\mathcal{L}$-axioms for of $\mathcal{C}^{\infty}-$rings; \\
			
			$\bullet$ the (equational) $\mathcal{L}'$-axiom
			$$\sigma:= (\forall x)((x\cdot x^{*}\cdot x = x)\& (x^*\cdot x\cdot x^*=x^*))$$
			
			\noindent that is, $\mathbb{T}^{'}:= \mathbb{T}\cup \{ \sigma\}$. By the \textbf{Theorem of Extension by Definition} 
			, we know that $\mathbb{T}^{'}$ is a conservative extension of $\mathbb{T}$.
		\end{remark}
		
		
		
		An homomorphism of von Neumann $\mathcal{C}^{\infty}-$rings, $A$ and $B$ is simply a $\mathcal{C}^{\infty}-$homo\-morphism between these $\mathcal{C}^{\infty}-$rings. We have the following: 
		

		\begin{definition}We denote by $\mathcal{C}^{\infty}{\rm \bf vNRing}$ the category whose objects are von Neumann-regular $\mathcal{C}^{\infty}-$rings and whose morphisms are the $\mathcal{C}^{\infty}-$\-homo\-morphisms between them. Thus, $\mathcal{C}^{\infty}{\rm \bf vNRing}$ is a full subcategory of $\mathcal{C}^{\infty}{\rm \bf Ring}$.
		\end{definition}
		
		The following lemma tells us that, in $\mathcal{C}^{\infty}{\rm \bf vNRing}$, taking localizations and taking the ring of fractions with respect to a special element yields, up to isomorphisms, the same object.
		
		\begin{lemma}\label{zeus}If $A$ is a von Neumann regular $\mathcal{C}^{\infty}-$ring, then given any $a \in A$ there is some idempotent element $e \in A$ such that $A\{ a^{-1}\} \cong A\{ e^{-1}\} \cong A/(1-e)$.
		\end{lemma}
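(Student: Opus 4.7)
The plan is to first use the von Neumann regularity of $A$ to locate the correct idempotent $e$, and then to prove the two localizations coincide via the universal property of $\mathcal{C}^{\infty}$-fractions, finally invoking Lemma \ref{destino} to identify $A\{e^{-1}\}$ with $A/(1-e)$.

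First, I would apply condition (ii) of Definition \ref{arnaldo} to the element $a \in A$ to obtain an idempotent $e \in A$ together with elements $y, z \in A$ satisfying $ey = a$ and $az = e$. This $e$ is my candidate, and it generates the same principal ideal as $a$. The remaining work is to show $A\{a^{-1}\} \cong A\{e^{-1}\}$; the isomorphism $A\{e^{-1}\} \cong A/(1-e)$ is then immediate from Lemma \ref{destino}.

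Next, I would verify the two key invertibility facts that feed into the universal property of $\mathcal{C}^{\infty}$-localizations. On the one hand, in $A\{a^{-1}\}$ the element $a$ becomes a unit, and the relation $a = ez$ gives $1 = a^{-1}a = a^{-1}ez = e \cdot (a^{-1}z)$, showing that $\eta_a(e)$ is invertible in $A\{a^{-1}\}$. On the other hand, in $A\{e^{-1}\}$ the relation $e = ay$ gives $1 = e^{-1}e = e^{-1}ay = a \cdot (e^{-1}y)$, so $\eta_e(a)$ is invertible in $A\{e^{-1}\}$. Both these computations are images under the respective localization maps; I would state them carefully using the notation $\eta_a, \eta_e$ to avoid confusion.

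Having these, the universal property recalled in the preliminaries produces unique $\mathcal{C}^{\infty}$-homomorphisms $\widetilde{\eta_e} : A\{a^{-1}\} \to A\{e^{-1}\}$ with $\widetilde{\eta_e} \circ \eta_a = \eta_e$, and $\widetilde{\eta_a} : A\{e^{-1}\} \to A\{a^{-1}\}$ with $\widetilde{\eta_a}\circ \eta_e = \eta_a$. Composing these in either order yields an endomorphism of the target localization compatible with the localization map from $A$; the uniqueness clause of the universal property forces both composites to equal the identity, giving the desired isomorphism. I do not anticipate a genuine obstacle: the only subtle point is bookkeeping — keeping track of which element is being made invertible in which ring — and this is easily handled by explicit use of $\eta_a$ and $\eta_e$.
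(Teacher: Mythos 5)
Your proof is correct and takes essentially the same route as the paper's: the paper observes that the relations $ey=a$ and $az=e$ place $e$ in $\{a\}^{\infty-{\rm sat}}$ and $a$ in $\{e\}^{\infty-{\rm sat}}$, which is precisely your universal-property argument phrased in the saturation language, and both then finish by invoking Lemma \ref{destino}. (Only a cosmetic slip: you write $a=ez$ and $e=ay$ where your own notation requires $a=ey$ and $e=az$; the invertibility conclusions are unaffected.)
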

		\begin{proof}Since $a \cdot y = e$ and $e \cdot x = a$, then $e \in \{ a\}^{\infty-{\rm sat}}$ and $a \in \{ e\}^{\infty-{\rm sat}}$. Thus, $\{a\}^{\infty-{\rm sat}} = \{ e\}^{\infty-{\rm sat}}$, so $A\{ a^{-1}\} \cong A\{ e^{-1}\}$. By \textbf{Lemma \ref{destino}}, it follows that $A\{a^{-1}\} \cong A\{ e^{-1}\} \cong A/(1-e)$. 
		\end{proof}

		\begin{lemma}Let $A$ be a von Neumann-regular $\mathcal{C}^{\infty}-$ring, $S \subseteq A$ and let $\widetilde{U}: \mathcal{C}^{\infty}{\rm \bf Ring} \rightarrow {\rm \bf CRing}$ be the forgetful functor. Then:
			$$\widetilde{U}\left( A\{ S^{-1}\}\right) = \widetilde{U}(A)[S^{-1}]$$
		\end{lemma}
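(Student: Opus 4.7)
The plan is to reduce, via filtered colimits and iterated single-element localization, to the case of an idempotent, where \textbf{Lemma \ref{destino}} gives an explicit presentation of both the $\mathcal{C}^{\infty}$- and classical localizations as the same quotient. The canonical comparison map $\widetilde{U}(A)[S^{-1}] \to \widetilde{U}(A\{ S^{-1}\})$, induced by the universal property of the classical ring of fractions applied to the image under $\widetilde{U}$ of $A \to A\{ S^{-1}\}$, will then be shown to be an isomorphism by exhibiting inverse identifications at each stage of the reduction.

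First, both constructions $A \mapsto A\{ S^{-1}\}$ and $R \mapsto R[S^{-1}]$ commute with filtered colimits over $\{F \subset_{\rm fin} S\}$, and $\widetilde{U}$ creates directed colimits (see the universal-algebraic remarks in \textbf{Section 2}), so it suffices to treat finite $S$. Induction on $|S|$ reduces this to the case $S=\{a\}$, provided each intermediate localization is again von Neumann regular. This holds because, by \textbf{Lemma \ref{zeus}} combined with \textbf{Lemma \ref{destino}}, every such localization has the form $A/(1-e)$, and quotients of von Neumann regular commutative rings are von Neumann regular.

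For $S=\{a\}$, \textbf{Lemma \ref{zeus}} supplies an idempotent $e$ with $A\{ a^{-1}\} \cong A\{ e^{-1}\}$; the witnesses $ay=e$ and $ex=a$ simultaneously show that $a$ and $e$ generate the same multiplicative saturation in $\widetilde{U}(A)$, hence $\widetilde{U}(A)[a^{-1}] \cong \widetilde{U}(A)[e^{-1}]$ as well. For the idempotent $e$, \textbf{Lemma \ref{destino}} yields $A\{ e^{-1}\} \cong A/(1-e)$, whence $\widetilde{U}(A\{ e^{-1}\}) = \widetilde{U}(A)/(1-e)$ (using that $\mathcal{C}^{\infty}$-congruences are classified by ring-theoretic ideals); classically $\widetilde{U}(A)[e^{-1}] \cong \widetilde{U}(A)/(1-e)$ for any idempotent, and chaining these identifications gives the required equality.

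The point requiring most care is naturality: the abstract isomorphism of objects produced at each reduction step must be induced by the canonical localization maps, so that the equalities paste together and survive passage to the colimit. This is a matter of tracking universal properties through each reduction and poses no serious obstacle, but it is the bookkeeping step that must not be skipped, since without it one only has an isomorphism of underlying sets rather than one induced by the canonical ring-theoretic and $\mathcal{C}^{\infty}$-localization maps.
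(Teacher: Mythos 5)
Your proof is correct and follows essentially the same route as the paper's: reduce to a single element, replace that element by the idempotent generator of its principal ideal via \textbf{Lemma \ref{zeus}}, identify both the smooth and the classical localization with $A/(1-e)$ via \textbf{Lemma \ref{destino}}, and pass to general $S$ by filtered colimits. The only (immaterial) difference is that for finite $S$ the paper localizes at the single element $a = \prod S$ rather than inducting on $|S|$, which lets it avoid your intermediate check that localizations of von Neumann-regular $\mathcal{C}^{\infty}$-rings remain von Neumann-regular.
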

		\begin{proof}
			We prove the result first in the case $S=\{ a\}$ for some  $a \in A$. Since $A$ is a von Neumann-regular $\mathcal{C}^{\infty}-$ring, by \textbf{Lemma \ref{zeus}}, given  $a \in A$ there is some idempotent element $e \in A$ such that $(a)=(e)$ and $A\{ a^{-1}\} \cong A\{ e^{-1}\} \cong A/(1-e)$. Now, $A/(1-e) \cong A[e^{-1}]$, and $A[e^{-1}] \cong \widetilde{U}(A)[e^{-1}]$, and since $\widetilde{U}(A)[e^{-1}]$ $\cong \widetilde{U}(A)[a^{-1}]$, as ordinary commutative rings \footnote{ note that $\widetilde{U}(A/(1-e)) = \widetilde{U}(A)/(1-e) = \widetilde{U}(A)[ e^{-1}]$ } the result follows.
			
			Whenever $S$ is finite, we have $A\{ {S}^{-1}\} = A\{ a^{-1}\}$, for $a = \prod S$, and we can use the proof we have just made. For a general $S \subseteq A$, we write $S = \cup_{S' \subseteq_{\rm fin} S} S'$ and use the fact that $\widetilde{U}: \mathcal{C}^{\infty}{\rm \bf Ring} \rightarrow {\rm \bf CRing}$ preserves directed colimits and that $A\{ S^{-1}\} \cong \varinjlim_{S' \subset_{\rm fin} S} A\{ {S'}^{-1}\}$.
		\end{proof}
		
		As a corollary, we have the following:
		
		\begin{prop}\label{vNRingsClosedUnderLocalizations}
			$\mathcal{C}^{\infty}{\rm \bf vNRing} \subseteq \mathcal{C}^{\infty}{\rm \bf Ring}$ is closed under localizations.
		\end{prop}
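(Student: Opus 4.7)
The plan is to reduce the statement to the purely ring-theoretic fact that localizations of von Neumann regular commutative rings are again von Neumann regular, by invoking the preceding lemma on compatibility of $\widetilde{U}$ with the formation of rings of fractions.

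Concretely, let $A$ be a von Neumann regular $\mathcal{C}^{\infty}$-ring and $S \subseteq A$. I first note that the condition (i) of \textbf{Definition \ref{arnaldo}} is a purely first-order statement in the language of commutative unital rings: $\mathfrak{A}$ is a von Neumann regular $\mathcal{C}^{\infty}$-ring if and only if its underlying commutative ring $\widetilde{U}(\mathfrak{A})$ is von Neumann regular in the classical sense. Therefore, to show that $A\{S^{-1}\}$ belongs to $\mathcal{C}^{\infty}{\rm \bf vNRing}$, it suffices to show that $\widetilde{U}(A\{S^{-1}\})$ is a von Neumann regular commutative ring.

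Next, by the preceding lemma we have $\widetilde{U}(A\{S^{-1}\}) = \widetilde{U}(A)[S^{-1}]$, so the problem collapses to the classical fact that if $R$ is a von Neumann regular commutative unital ring and $T \subseteq R$, then $R[T^{-1}]$ is again von Neumann regular. This is standard: given $\frac{a}{s} \in R[T^{-1}]$, pick $x \in R$ with $a = a^2 x$ from von Neumann regularity of $R$, and verify directly that $\frac{sx}{1}$ is a quasi-inverse of $\frac{a}{s}$, i.e.\ $\frac{a}{s} = \bigl(\frac{a}{s}\bigr)^2 \cdot \frac{sx}{1}$. Combining, $\widetilde{U}(A\{S^{-1}\})$ is von Neumann regular, and hence $A\{S^{-1}\}$ is a von Neumann regular $\mathcal{C}^{\infty}$-ring.

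There is no real obstacle here; the only subtlety is to confirm that the defining conditions of \textbf{Definition \ref{arnaldo}} really are properties of $\widetilde{U}(\mathfrak{A})$ alone (so that no further $\mathcal{C}^{\infty}$-operation interferes), which is clear from the statement of condition (i). The heavy lifting has already been done in the previous lemma, where one had to handle infinite $S$ via directed colimits; once that is in hand, closure under localization is essentially a corollary.
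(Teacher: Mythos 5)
Your proof is correct and follows exactly the route the paper intends: the paper states this proposition as an immediate corollary of the preceding lemma $\widetilde{U}(A\{S^{-1}\}) = \widetilde{U}(A)[S^{-1}]$, leaving the reduction to the classical ring-theoretic fact implicit. You have simply spelled out that reduction (including the correct quasi-inverse computation $\frac{a}{s} = \bigl(\frac{a}{s}\bigr)^2\cdot\frac{sx}{1}$), so there is nothing to add.
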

		
		The following result is an adaptation of \textbf{Proposition 1} of \cite{arndt2016neumann} for the $\mathcal{C}^{\infty}-$case.
		
		\begin{theorem}\label{backy}If $A$ is a von Neumann regular $\mathcal{C}^{\infty}-$ring then $A$ is a reduced $\mathcal{C}^{\infty}-$ring.
		\end{theorem}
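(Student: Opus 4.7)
The plan is to unfold the definition of ``reduced'' and reduce everything to \textbf{Lemma \ref{zeus}}. By the characterization of the $\mathcal{C}^{\infty}$-radical in terms of the $\mathcal{C}^{\infty}$-ring of fractions recalled in the preliminaries, proving $A$ is reduced amounts to showing that any $a \in A$ for which $A\{a^{-1}\} \cong \{0\}$ must equal $0_A$. So I would fix such an $a$ and try to force $a = 0$ using only the von Neumann regularity data.

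First, I would invoke \textbf{Definition \ref{arnaldo}}(ii) to produce an idempotent $e \in A$ and elements $y,z \in A$ with $ey = a$ and $az = e$. \textbf{Lemma \ref{zeus}} then yields $A\{a^{-1}\} \cong A/(1-e)$. Under the hypothesis $a \in \sqrt[\infty]{(0)}$, this says $A/(1-e) \cong \{0\}$ as a $\mathcal{C}^{\infty}$-ring; equivalently $1_A \in (1-e)$, so there exists $b \in A$ with $1 = (1-e)b$.

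The computational heart of the argument is then a one-line manipulation with the idempotent: multiplying $1 = (1-e)b$ by $e$ and using $e^2 = e$ gives
\[
e = e(1-e)b = (e - e^2)b = 0.
\]
Combining this with $a = ey$ yields $a = 0$, as required.

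The only place where anything could go wrong is in the passage $A/(1-e) \cong \{0\} \Rightarrow 1 \in (1-e)$, but this is immediate: a $\mathcal{C}^{\infty}$-ring is trivial iff $1 = 0$ in it, and this is in turn an underlying-ring statement, so it follows either directly or via the forgetful functor $\widetilde{U}$. No subtlety about the $\mathcal{C}^{\infty}$-structure enters, since the idempotent calculus is purely ring-theoretic; the $\mathcal{C}^{\infty}$-content of the argument is concentrated entirely in \textbf{Lemma \ref{zeus}}, which identifies the localization $A\{a^{-1}\}$ with the principal quotient $A/(1-e)$. I do not anticipate any real obstacle.
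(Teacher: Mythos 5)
Your proposal is correct and follows essentially the same route as the paper: both reduce the statement to showing that $a\in\sqrt[\infty]{(0)}$ forces $a=0$ via \textbf{Lemma \ref{zeus}} and the isomorphism $A\{a^{-1}\}\cong A/(1-e)$, and then kill the idempotent $e$ from $1\in(1-e)$. Your one-line computation $e=e(1-e)b=0$ is just a minor rephrasing of the paper's observation that an invertible idempotent $1-e$ must equal $1$.
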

		\begin{proof}By the \textbf{Lemma \ref{zeus}},
			$\sqrt[\infty]{(0)} = \{ a \in A \mid A\{ a^{-1}\} \cong \{ 0\}\}$. Now, let $a \in \sqrt[\infty]{(0)}$ and let $e \in A$ be an idempotent element such that $(a)=(e)$. Then $A/(1-e) \cong A\{ e^{-1}\} \cong A\{ a^{-1}\}$. Thus, $A/(1-e) \cong \{ 0\}$ yields $1 \in (1-e)$, so there must exist some $z \in A$ such that $1 = z\cdot (1-e)$, and $(1-e)$ is an invertible idempotent of $A$, so $1-e=1$ and $e=0$. Thus, $a = 0$, so $\sqrt[\infty]{(0)} \subseteq \{ 0\}$. 
		\end{proof}
		
		The following result shows us that whenever $A$ is a von Neumann regular $\mathcal{C}^{\infty}-$ring, the notions of $\mathcal{C}^{\infty}-$spectrum, Zariski spectrum, maximal spectrum and thus, the \index{structure sheaf}structure sheaf of its affine scheme coincide.

		\begin{theorem}\label{bender}Let $A$ be a von Neumann regular $\mathcal{C}^{\infty}-$ring. Then:
			\begin{itemize}
				\item[1)]{$\sqrt[\infty]{(0)} = \sqrt{(0)}=(0)$;}
				\item[2)]{${\rm Spec}^{\infty}\,(A) = {\rm Specm}\,(\widetilde{U}(A)) = {\rm Spec}\, (\widetilde{U}(A))$, as topological spaces;}
				\item[3)]{The structure sheaf of $A$ in the category $\mathcal{C}^{\infty}{\rm \bf Ring}$ is equal to the structure sheaf of $U(A)$ in the category ${\rm \bf CRing}$.}
			\end{itemize}
		\end{theorem}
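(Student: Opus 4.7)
The plan is to tackle the three items in order, with (1) falling out immediately from \textbf{Theorem \ref{backy}}, (2) reducing to the classical fact that every prime in a von Neumann regular ring is maximal plus a short check that maximals are $\mathcal{C}^{\infty}$-radical, and (3) following from the preceding lemma on the commutation of $\widetilde{U}$ with localizations. For (1), \textbf{Theorem \ref{backy}} already supplies $\sqrt[\infty]{(0)} = (0)$; combined with the general inclusion $\sqrt{(0)} \subseteq \sqrt[\infty]{(0)}$, valid because if $a^n = 0$ then $a$ is simultaneously a unit and nilpotent in $A\{a^{-1}\}$, forcing that localization to be trivial, one gets the full chain $(0) \subseteq \sqrt{(0)} \subseteq \sqrt[\infty]{(0)} = (0)$.

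For (2), standard commutative algebra gives ${\rm Spec}(\widetilde{U}(A)) = {\rm Specm}(\widetilde{U}(A))$ as topological spaces, because every prime of a von Neumann regular ring is maximal and the subspace topology on the maximal spectrum then coincides with the ambient Zariski topology. The inclusion ${\rm Spec}^{\infty}(A) \subseteq {\rm Spec}(\widetilde{U}(A))$ is tautological; the reverse is what has content. I would take a prime $\mathfrak{m} \subseteq \widetilde{U}(A)$, use that it is maximal, and verify $\sqrt[\infty]{\mathfrak{m}} = \mathfrak{m}$ as follows: for $a \notin \mathfrak{m}$, the class $a+\mathfrak{m}$ is a nonzero element of the field $\widetilde{U}(A)/\mathfrak{m}$, hence already classically invertible and therefore a $\mathcal{C}^{\infty}$-unit, so $(A/\mathfrak{m})\{(a+\mathfrak{m})^{-1}\} \cong A/\mathfrak{m} \not\cong \{0\}$, giving $a \notin \sqrt[\infty]{\mathfrak{m}}$. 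The equality of topologies on this common underlying set is then immediate since $D^{\infty}(a)$ and $D(a)$ are defined by identical formulas.

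For (3), both structure sheaves are obtained by sheafification from presheaves on the common basis $\{D^{\infty}(a) = D(a)\}_{a \in A}$, assigning $A\{a^{-1}\}$ in the $\mathcal{C}^{\infty}$ case and $\widetilde{U}(A)[a^{-1}]$ in the classical case. The preceding lemma, together with \textbf{Lemma \ref{zeus}} and \textbf{Proposition \ref{vNRingsClosedUnderLocalizations}} (which ensures that each $A\{a^{-1}\}$ is again von Neumann regular, in particular reduced), identifies these two presheaves after applying $\widetilde{U}$, so their sheafifications coincide. The main obstacle I anticipate is the $\mathcal{C}^{\infty}$-radicality check in (2): one must argue with care that the smooth localization of the quotient field at a nonzero class remains nontrivial, since this is the only place where the specifically smooth content enters and it would be easy to conflate smooth with classical rings of fractions.
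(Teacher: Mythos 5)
Your proposal is correct and follows essentially the same route as the paper's proof: item (1) via \textbf{Theorem \ref{backy}} plus the general inclusion $\sqrt{(0)}\subseteq\sqrt[\infty]{(0)}$, item (2) via the fact that every prime in a von Neumann regular ring is maximal combined with the coincidence of the basic opens $D^{\infty}(a)=D(a)$, and item (3) by transporting the structure sheaf along this identification. If anything, you fill in two details the paper leaves implicit, namely the direct verification that every maximal ideal is $\mathcal{C}^{\infty}$-radical (the paper simply asserts ${\rm Specm}\,(A)\subseteq{\rm Spec}^{\infty}\,(A)$) and the appeal to the lemma $\widetilde{U}(A\{S^{-1}\})=\widetilde{U}(A)[S^{-1}]$ to match the two structure presheaves on the common basis (the paper calls item (3) immediate).
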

		\begin{proof}
			Ad 1): By \textbf{Theorem \ref{backy}}, since $A$ is a von Neumann regular $\mathcal{C}^{\infty}-$ring, $\sqrt[\infty]{(0)} = (0)$. Since we always have $(0) \subseteq \sqrt{(0)} \subseteq \sqrt[\infty]{(0)}$, it follows that $\sqrt{(0)}=(0)$.


			Ad 2): Note that in a von Neumann regular $\mathcal{C}^{\infty}-$ring every prime ideal is a maximal ideal. In fact, let $\mathfrak{p}$ be a prime ideal in $A$. Given $a + \mathfrak{p} \neq \mathfrak{p}$ in $A/\mathfrak{p} $, then $a + \mathfrak{p} \in \left( A/\mathfrak{p} \right)^{\times}$. Since $A$ is a von Neumann regular ring, there exists some $b \in A$ such that $aba = a$, so $a + \mathfrak{p} = aba + \mathfrak{p}$, $a + \mathfrak{p} = (ab + \mathfrak{p})\cdot (a + \mathfrak{p})$, $ab + \mathfrak{p} = 1 + \mathfrak{p}$ and, thus, $ab=1$.
			
			Hence, every non-zero element of $A/\mathfrak{p}$ is invertible, so $A/\mathfrak{p}$ is a field. Under those circumstances, it follows that $\mathfrak{p}$ is a maximal ideal, so ${\rm Spec}\, (A) = {\rm Specm}\,(A)$.
			
			We always have ${\rm Specm}\, (A) \subseteq {\rm Spec}^{\infty}\, (A)$ and ${\rm Spec}^{\infty}\, (A) \subseteq {\rm Spec}\,(A)$, so:
			$${\rm Spec}\,(A) \subseteq {\rm Specm}\, (A) \subseteq {\rm Spec}^{\infty}\,(A) \subseteq {\rm Spec}\,(A)$$
			and ${\rm Spec}^{\infty}\,(A) = {\rm Spec}\, (A)$.
			
			Note, also, that both the topological spaces ${\rm Spec}\,(A)$ and ${\rm Spec}^{\infty}\,(A)$ have the same basic open sets, $D^{\infty}(a) = \{ \mathfrak{p} \in {\rm Spec}^{\infty}\,(A) \mid a \notin \mathfrak{p} \} = \{ \mathfrak{p} \in {\rm Spec}\,(A) \mid a \notin \mathfrak{p}\} = D(a)$, hence ${\rm Spec}^{\infty}\,(A) = {\rm Spec}\, (A)$ also as topological spaces.
			
			Ad 3). It is an immediate consequence of $2$.
		\end{proof}
		
		\begin{prop}\label{proposition1}Let $A$ be a $\mathcal{C}^{\infty}-$ring. Then the following are equivalent:
			\begin{itemize}
				\item[(i)]{$A$ is von Neumann-regular, \textit{i.e.}, $(\forall a \in A)(\exists x \in A)(a = a^2x)$.}
				\item[(ii)]{Every principal ideal of $A$ is generated by an idempotent element, \textit{i.e.},
					$$(\forall a \in A)(\exists e \in A)(\exists y \in A)(\exists z \in A)((e^2=e)\&(ey=a)\wedge(az=e))$$}
				\item[(iii)]{$(\forall a \in A)(\exists ! b \in A )((a=a^2b)\&(b=b^2a))$}
			\end{itemize}
			Moreover, when $A$ is von Neumann-regular, then $A$ is ($\mathcal{C}^{\infty}-$)reduced (\textit{i.e.}, $\sqrt[\infty]{(0)}= \sqrt{(0)} =(0)$) and for each $a \in A$ the idempotent element $e$ satisfying (ii) and the element $b$ satisfying (iii) are uniquely determined.
		\end{prop}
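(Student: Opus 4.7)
The plan is to observe that conditions (i), (ii), (iii) are purely ring-theoretic statements about $\widetilde{U}(A)$, so the equivalences reduce to the classical commutative algebra of von Neumann regular rings applied to the underlying ring, while the reducedness claim in the ``moreover" part has already been established in \textbf{Theorem \ref{backy}}. I would therefore organize the proof as three short implications (i) $\Rightarrow$ (ii) $\Rightarrow$ (i) and (i) $\Leftrightarrow$ (iii), followed by two uniqueness arguments.

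For (i) $\Rightarrow$ (ii), given $a = a^{2}x$, I would set $e := a x$ and check $e^{2} = a^{2}x^{2} = (a^{2}x)x = a x = e$, together with $e \cdot a = (ax)a = a^{2}x = a$ (so $y := a$ works) and $a \cdot x = e$ (so $z := x$ works). For (ii) $\Rightarrow$ (i), from $e^{2}=e$, $ey=a$, $az=e$ I would compute $a^{2}z = a(az) = ae = (ey)e = e^{2}y = ey = a$, giving (i) with $x := z$. For (i) $\Rightarrow$ (iii), the natural candidate is $b := x^{2}a$, and one verifies directly that $a^{2}b = a^{3}x^{2} = a(a^{2}x)x = a \cdot ax = a$ and similarly $b^{2}a = x^{4}a^{3} = b$; the direction (iii) $\Rightarrow$ (i) is immediate from $a = a^{2}b$.

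For the uniqueness of $e$ in (ii), if two idempotents $e_{1},e_{2}$ generate the same principal ideal $(a)$, writing $e_{1} = e_{2}s$ and $e_{2} = e_{1}t$ I would deduce $e_{1}e_{2} = e_{2}^{2}s = e_{2}s = e_{1}$ and symmetrically $e_{1}e_{2} = e_{2}$, forcing $e_{1} = e_{2}$. For the uniqueness of $b$ in (iii), given two candidates $b,b'$ satisfying $a = a^{2}b = a^{2}b'$ and $b = b^{2}a$, $b' = b'^{2}a$, I would exploit the idempotency of $ab$ and $ab'$ (each equals $a^{2}bb = ab$, etc.) and the identity $b = b^{2}a = b \cdot (ab) = b \cdot (ab')$, together with the symmetric computation for $b'$, to conclude $b = b'$. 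Finally, the reducedness statement $\sqrt[\infty]{(0)} = \sqrt{(0)} = (0)$ is extracted by combining \textbf{Theorem \ref{backy}} (giving $\sqrt[\infty]{(0)} = (0)$) with the general inclusion $(0) \subseteq \sqrt{(0)} \subseteq \sqrt[\infty]{(0)}$.

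I do not expect a genuine obstacle here: all three equivalences are formal manipulations that go through verbatim at the level of $\widetilde{U}(A)$, and the only mild subtlety is picking the correct polynomial expression for $b$ in terms of $a,x$ in the direction (i) $\Rightarrow$ (iii), where $b = x^{2}a$ (or equivalently $b = xax$ by commutativity) is the unique choice that makes both defining equations of (iii) hold simultaneously.
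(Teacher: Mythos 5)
Your proposal is correct, and the core equivalences (i) $\Rightarrow$ (ii) via $e:=ax$, (ii) $\Rightarrow$ (i), and (i) $\Rightarrow$ (iii) via $b:=ax^{2}=x^{2}a$ coincide with the paper's computations (your choice $x:=z$ in (ii) $\Rightarrow$ (i) is a slight simplification of the paper's $x:=z^{2}y$, and both work). The genuine difference is in the uniqueness of the quasi-inverse $b$ in (iii): the paper deduces $(b-b')a^{2}=0$, hence $[(b-b')a]^{2}\in(0)$, and then invokes reducedness ($\sqrt[\infty]{(0)}=\sqrt{(0)}=(0)$, which itself rests on \textbf{Lemma \ref{zeus}} and the localization machinery) to kill the nilpotent $(b-b')a$ before concluding $b=b'$. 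Your argument is purely equational: from $a=a^{2}b=a^{2}b'$ one gets $ab=(a^{2}b')b=(a^{2}b)b'=ab'$ (this one line is the only step you left implicit and should be stated explicitly, since idempotency of $ab$ alone does not give $b(ab)=b(ab')$), whence $b=b^{2}a=b(ab)=b(ab')=(ab')b'=b'^{2}a=b'$. This buys a proof of the uniqueness clause that is independent of the smooth commutative algebra and of reducedness, and it is arguably the more robust route since it works verbatim in any commutative ring; the paper's route has the cosmetic advantage of reusing \textbf{Theorem \ref{backy}} and \textbf{Theorem \ref{bender}}, which is also how you (legitimately) handle the reducedness claim itself. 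Your uniqueness argument for the idempotent $e$ matches the paper's standard computation.
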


		\begin{proof}
			The implication (iii) $\to$ (i) is obvious, so we omit the proof.
			
			Ad (i) $\to$ (ii): Let $I = (a)$ be a principal ideal of $A$. By (i), there is $x \in A$ such that $a = a^2x$, so we define $e:=ax$, which is idempotent since $e^2 = (ax)^2 = a^2x^2 = (a^2x)x = ax = e$. By definition, $e = ax \in (a) = I$, so $(e) \subseteq I$, and since  $a = a^2x = (ax)a = ea$ we also have $a \in (e)$, so $I = (a) \subseteq (e)$. Hence, $I = (e)$.
			
			Ad (ii) $\to$ (i): Let $a \in A$ be any element. By (ii) there are $e \in A$, $y \in A$ and $z \in A$ such that $e^2 =e$, $a = ey$ and $e = az$. Define $x:= z^2y$, and we have $a^2x = a^2z^2y = e^2y = ey = a$.
			
			Ad (i) $\to$ (iii): Let $a \in A$ be any element. By (i), there is some $x \in A$ such that $a= a^2x$. There can be many $x \in A$ satisfying this role, but there is a ``minimal'' one: the element $ax$ is idempotent and we can project any chosen $x$ down with this idempotent, obtaining $b:=ax^2$. Then $aba = aab^2a = (ax)(ax)a = axa = a$ and $bab = (ax^2)a(ax^2) = (ax)^3x = (ax)x = b$.

			Now suppose that $A$ is a von Neumann-regular $\mathcal{C}^{\infty}-$ring, and let $a \in A$ be such that $a \in \sqrt[\infty]{(0)}$. Then let $e$ be an idempotent such that $ey=a$, $az=e$, for some $y,z \in A$. Then $a$ is such that $A\{ a^{-1}\} \cong \{0\}$, and by \textbf{Lemma \ref{zeus}} there is some idempotent $e \in A$ such that $A\{ a^{-1}\} \cong A/(1-e)$. Now, $A\{ a^{-1}\} \cong \{ 0\}$ occurs if and only if, $A/(1-e) \cong \{0\}$, \textit{i.e.}, if and only if, $(1-e) = A$. Since $(1-e) = A$, it follows that $1-e \in A^{\times}$, and since $e \cdot (1-e) = 0$, it follows by cancellation that $e = 0$, hence $a = ey = 0y=0$.
			
			Let $e,e' \in A$ be idempotents of an arbitrary ring satisfying $(e) = (e')$. Select $r,r'\in A$ such that $er'=e'$ and $e'r = e$. Then $e' = er' = er'e = e'e = e're'=e'r=e$. Thus, if an ideal is generated by an idempotent element, this element is uniquely determined.
			
			Finally, let $A$ be a von Neumann-regular $\mathcal{C}^{\infty}-$ring. Select a member $a \in A$ and consider $b,b'\in A$ such that $a^2b'=a=a^2b$, $b=b^2a$, $b'={b'}^2a$. Then $(b-b')a^2=(b-b')a^2=(b-b')(ba^2-b'a^2)=(b-b')(a-a)=(b-b')\cdot 0 = 0$ and $[(b-b')\cdot a]^2 \in (0)$. Since $A$ is $\mathcal{C}^{\infty}-$reduced, $[(b-b')\cdot a]^2 \in (0) = \sqrt[\infty]{(0)}$. By item (1) of \textbf{Theorem \ref{bender}}, $\sqrt[\infty]{(0)} = \sqrt{(0)}$, so $[(b-b')\cdot a]^2 \in \sqrt[\infty]{(0)} = \sqrt{(0)}$ and $(b-b')\cdot a = 0$. Therefore $b - b' = b^2a - {b'}^2a= (b^2-{b'}^2)a = (b+b')(b-b')a = (b+b') \cdot 0 = 0$.
		\end{proof}

		\begin{remark}\label{agp}Let $A$ be a von Neumann-regular $\mathcal{C}^{\infty}-$ring and $e \in A$ be any idempotent element. Then $A \cdot e$ is a von Neumann-regular $\mathcal{C}^{\infty}-$ring. Indeed, we have $A \cdot e \cong A/(1-e)$ and the latter is an homomorphic image of a von Neumann-regular $\mathcal{C}^{\infty}-$ring, namely $A/(1-e) = q[A]$. 
		\end{remark}

		\begin{lemma}\label{moura}Let $A$ be a local $\mathcal{C}^{\infty}-$ring. The only idempotent elements of $A$ are $0$ and $1$.
		\end{lemma}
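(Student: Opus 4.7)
The plan is to reduce the statement to the standard fact that a (commutative, unital) local ring has no idempotents other than $0$ and $1$, since being a local $\mathcal{C}^{\infty}$-ring was defined (in the remark on smooth commutative algebra) to mean that the underlying ring $\widetilde{U}(A)$ is local. The $\mathcal{C}^{\infty}$-structure plays no active role: an idempotent of $A$ is exactly an idempotent of $\widetilde{U}(A)$, since the equation $e^2 = e$ lives entirely in the commutative ring reduct.

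The key steps I would carry out are the following. First, suppose $e \in A$ is idempotent, so that $e^2 = e$; then $(1-e)^2 = 1 - 2e + e^2 = 1 - e$, so $1-e$ is also idempotent, and moreover $e + (1-e) = 1$. Second, invoke the hypothesis that $\widetilde{U}(A)$ is local, which means its set of non-units $\mathfrak{m} = A \setminus A^{\times}$ is an ideal (equivalently, $A$ has a unique maximal ideal). Since $\mathfrak{m}$ is proper and $e + (1-e) = 1 \notin \mathfrak{m}$, at least one of $e$, $1-e$ must lie outside $\mathfrak{m}$, i.e., must be a unit.

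Third, finish by case analysis. If $e \in A^{\times}$, multiply $e^2 = e$ by $e^{-1}$ to get $e = 1$. If instead $1 - e \in A^{\times}$, multiply $e(1-e) = e - e^2 = 0$ by $(1-e)^{-1}$ to conclude $e = 0$. In either case $e \in \{0, 1\}$, as required.

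There is no real obstacle here; the only thing worth a careful line in the write-up is making clear that the notion of ``local $\mathcal{C}^{\infty}$-ring'' genuinely reduces to the ring-theoretic notion via $\widetilde{U}$, so that the classical argument applies verbatim. No appeal to \textbf{Lemma \ref{destino}} or to the von Neumann regularity machinery is needed, which matches the placement of this lemma as a preparatory fact for subsequent results.
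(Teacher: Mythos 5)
Your proof is correct and complete. The paper itself does not give an argument here --- it simply defers to \textbf{Lemma 4} of \cite{berni2019neumann} --- and your write-up supplies exactly the standard argument one would expect that reference to contain: reduce to the underlying commutative unital local ring via $\widetilde{U}$, observe that $e$ and $1-e$ are complementary idempotents summing to $1$ so that one of them must be a unit (the non-units forming the unique maximal ideal), and conclude $e=1$ or $e=0$ by cancelling in $e^2=e$ or $e(1-e)=0$ respectively. No gap.
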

		\begin{proof}
			See \textbf{Lemma 4} of \cite{berni2019neumann}.
		\end{proof}

		\begin{prop}\label{dan}Let $A$ be a von Neumann-regular $\mathcal{C}^{\infty}-$ring whose only idempotent elements are $0$ and $1$. Then the following assertions are equivalent:
			\begin{itemize}
				\item[(i)]{$A$ is a $\mathcal{C}^{\infty}-$field;}
				\item[(ii)]{$A$ is a $\mathcal{C}^{\infty}-$domain;}
				\item[(iii)]{$A$ is a local $\mathcal{C}^{\infty}-$ring.}
			\end{itemize}
		\end{prop}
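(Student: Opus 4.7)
My plan is to close the cycle (i) $\Rightarrow$ (ii) $\Rightarrow$ (iii) $\Rightarrow$ (i), noting throughout that the three notions concern only the underlying commutative unital ring $\widetilde{U}(A)$, so that I may freely exploit the characterizations of von Neumann regularity collected in \textbf{Proposition \ref{proposition1}}. The implication (i) $\Rightarrow$ (ii) is immediate since a field is in particular a domain.

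For (ii) $\Rightarrow$ (iii), the key observation is that a von Neumann-regular domain is automatically a field. Given $a \in A$ with $a \neq 0$, condition (i) of \textbf{Definition \ref{arnaldo}} provides $x \in A$ with $a = a^2 x$; rewriting this as $a\cdot(1-ax)=0$ and using that $A$ is a $\mathcal{C}^{\infty}-$domain, I conclude $ax = 1$, so $a$ is a unit. Thus $A$ is a $\mathcal{C}^{\infty}-$field, which is trivially a local $\mathcal{C}^{\infty}-$ring (whose unique maximal ideal is $(0)$). Note that the assumption on the idempotents is not really used here; it is automatic in a domain, since $e(1-e)=0$ forces $e\in\{0,1\}$.

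For (iii) $\Rightarrow$ (i), I combine von Neumann regularity with the hypothesis on idempotents (which, incidentally, is redundant given \textbf{Lemma \ref{moura}}). Pick $a \in A$ with $a\neq 0$. By condition (ii) of \textbf{Definition \ref{arnaldo}}, the principal ideal $(a)$ is generated by an idempotent $e \in A$, so by hypothesis $e = 0$ or $e = 1$. The case $e=0$ gives $a \in (0)$, hence $a=0$, a contradiction; therefore $e=1$, so $(a)=A$ and $a$ is invertible. Thus every nonzero element of $A$ is a unit and $A$ is a $\mathcal{C}^{\infty}-$field.

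There is no real obstacle: each implication reduces to a one-line application of a characterization already established in \textbf{Proposition \ref{proposition1}}. The only subtlety worth flagging is that the equivalences live at the level of the underlying commutative ring, so it suffices to argue in $\widetilde{U}(A)$, avoiding any $\mathcal{C}^{\infty}-$theoretic complication.
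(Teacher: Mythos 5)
Your proof is correct and follows essentially the same route as the paper: both hinge on the fact that in a von Neumann-regular ring with only trivial idempotents, every nonzero principal ideal is generated by the idempotent $1$, hence every nonzero element is a unit. The only cosmetic differences are that you close a cycle (i)$\Rightarrow$(ii)$\Rightarrow$(iii)$\Rightarrow$(i) instead of proving (ii)$\Rightarrow$(i) and (iii)$\Rightarrow$(i) separately, and in the domain case you cancel directly from $a(1-ax)=0$ rather than invoking the idempotent generator — both equally valid.
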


		\begin{proof}
			The implications (i) $\to$ (ii), (i) $\to$ (iii) are immediate, so we omit their proofs. 
			
			Ad (iii) $\to$ (i): Suppose $A$ is a local $\mathcal{C}^{\infty}-$ring. Since $A$ is a von Neumann-regular $\mathcal{C}^{\infty}-$ring, given any $x \in A\setminus \{ 0\}$ there exists some idempotent element $e \in A$ such that $(x)=(e)$. However, the only idempotent elements of $A$ are, by \textbf{Lemma \ref{moura}}, $0$ and $1$. We claim that $(x)=(1)$, otherwise we would have $(x)=(0)$, so $x=0$.
			
			Now, $(x) = (1)$ implies $1 \in (x)$, so there is some $y \in A$ such that $1 = x \cdot y = y \cdot x$, and $x$ is invertible. Thus $A$ is a $\mathcal{C}^{\infty}-$field.
			
			Ad (ii) $\to$ (i): Suppose $A$ is a $\mathcal{C}^{\infty}-$domain. Given any $x \in A \setminus \{ 0\}$, we have $(\forall y \in A \setminus \{ 0\})(x\cdot y \neq 0)$, so $(x)\neq (0)$. Since $A$ is a von Neumann-regular $\mathcal{C}^{\infty}-$ring, $(x)$ is generated by some non-zero idempotent element, namely, $1$. Hence $(x) = (1)$ and $x \in A^{\times}$.
		\end{proof}

		\begin{prop}\label{proposition4} The inclusion functor $\imath : \mathcal{C}^{\infty}{\rm \bf vNRing} \hookrightarrow \mathcal{C}^{\infty}{\rm \bf Ring}$ creates filtered colimits, \textit{i.e.}, $\mathcal{C}^{\infty}{\rm \bf vNRing}$ is closed  in $\mathcal{C}^{\infty}{\rm \bf Ring}$ under filtered colimits.
		\end{prop}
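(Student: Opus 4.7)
The plan is to compute the filtered colimit in the ambient category $\mathcal{C}^{\infty}{\rm \bf Ring}$ and then verify that the resulting $\mathcal{C}^{\infty}$-ring still satisfies the axiom in \textbf{Definition \ref{arnaldo}}(i). Since $\mathcal{C}^{\infty}{\rm \bf vNRing}$ is a full subcategory of $\mathcal{C}^{\infty}{\rm \bf Ring}$, once the colimit object lies in $\mathcal{C}^{\infty}{\rm \bf vNRing}$, the universal property transfers automatically, and this is precisely what it means for the inclusion to create the colimit.

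Concretely, let $D : I \to \mathcal{C}^{\infty}{\rm \bf vNRing}$ be a filtered diagram, write $A_i := D(i)$, and let $(A, (\eta_i : A_i \to A)_{i \in I})$ be the colimit of $\imath \circ D$ in $\mathcal{C}^{\infty}{\rm \bf Ring}$. By the universal-algebra remark at the end of Section 2, the forgetful functor $U : \mathcal{C}^{\infty}{\rm \bf Ring} \to {\rm \bf Set}$ creates directed colimits, so every element of $A$ is of the form $\eta_i(a_i)$ for some $i \in I$ and some $a_i \in A_i$, and two such elements $\eta_i(a_i)$, $\eta_j(a_j)$ coincide in $A$ iff there exist maps $\alpha : i \to k$, $\beta : j \to k$ in $I$ with $D(\alpha)(a_i) = D(\beta)(a_j)$.

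Now fix $a \in A$ and pick $i \in I$ and $a_i \in A_i$ with $a = \eta_i(a_i)$. Since $A_i$ is von Neumann regular, there is $x_i \in A_i$ with $a_i = a_i^2 \cdot x_i$. Applying the $\mathcal{C}^{\infty}$-homomorphism $\eta_i$ (which commutes with multiplication, as multiplication is interpreted by a smooth function symbol), we obtain
\[
a \;=\; \eta_i(a_i) \;=\; \eta_i(a_i^2 \cdot x_i) \;=\; \eta_i(a_i)^2 \cdot \eta_i(x_i) \;=\; a^2 \cdot x,
\]
where $x := \eta_i(x_i) \in A$. Hence $A$ satisfies condition (i) of \textbf{Definition \ref{arnaldo}}, so $A \in \mathcal{C}^{\infty}{\rm \bf vNRing}$.

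There is no real obstacle here: the whole argument rests on two standard facts, namely that filtered colimits in $\mathcal{C}^{\infty}{\rm \bf Ring}$ are computed on underlying sets, and that the von Neumann regularity condition is purely existential over an equation in the language of $\mathcal{C}^{\infty}$-rings. The only thing to double-check is that the colimit cocone in $\mathcal{C}^{\infty}{\rm \bf vNRing}$ is universal: given any cocone $(A_i \to B)_{i \in I}$ with $B \in \mathcal{C}^{\infty}{\rm \bf vNRing}$, the induced unique morphism $A \to B$ in $\mathcal{C}^{\infty}{\rm \bf Ring}$ is automatically a morphism in $\mathcal{C}^{\infty}{\rm \bf vNRing}$ by fullness. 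This completes the creation of filtered colimits, and in particular shows $\mathcal{C}^{\infty}{\rm \bf vNRing}$ is closed in $\mathcal{C}^{\infty}{\rm \bf Ring}$ under filtered colimits.
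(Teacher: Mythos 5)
Your proof is correct and follows essentially the same route as the paper's: both exploit that filtered colimits in $\mathcal{C}^{\infty}{\rm \bf Ring}$ are computed on underlying sets, so every element comes from some stage $A_i$, and then push a witness of regularity forward along the colimit cocone. The only (immaterial) difference is that you verify condition (i) of Definition \ref{arnaldo} by transporting the quasi-inverse $x_i$, whereas the paper verifies condition (ii) by transporting the idempotent generator $e_i$.
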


		\begin{proof}(Sketch) Filtered colimits in $\mathcal{C}^{\infty}{\rm \bf Ring}$ are formed by taking the colimit of the underlying sets and defining the $n-$ary functional symbol $f^{(n)}$ of an $n-$tuple $(a_1, \cdots, a_n)$ into a common $\mathcal{C}^{\infty}-$ring occurring in the diagram and taking the element $f^{(n)}(a_1, \cdots, a_n)$ there. Given a filtered poset $(I, \leq)$ and a a filtered family of $\mathcal{C}^{\infty}-$rings, for every element $\alpha \in \varinjlim A_i$, there is some $i \in I$ and $a_i \in A_i$ such that $\alpha = [(a_i,i)]$. Since $A_i$ is a von Neumann-regular $\mathcal{C}^{\infty}-$ring, there must exist some idempotent $e_i \in A_i$ such that $(a_i)=(e_i)$. It suffices to take $\eta = [(e_i,i)] \in \varinjlim A_i$, which is an idempotent element of $\varinjlim A_i$ such that $(\alpha) = ([(a_i, i)]) = ([(e_i,i)]) = (\eta)$.
		\end{proof}

		We have the following important result, which relates von Neumann-regular $\mathcal{C}^{\infty}-$rings to the topology of its smooth Zariski spectrum:
		
		\begin{theorem}\label{ota} Let $A$ be a $\mathcal{C}^{\infty}-$ring. The following assertions are equivalent:
			\begin{itemize}
				\item[(i)]{$A$ is a von Neumann-regular $\mathcal{C}^{\infty}-$ring;}
				\item[(ii)]{$A$ is a $\mathcal{C}^{\infty}-$reduced $\mathcal{C}^{\infty}-$ring (i.e., $\sqrt[\infty]{(0)}=(0)$) and ${\rm Spec}^{\infty}\,(A)$ is a Boolean space, \textit{i.e.},a compact, Hausdorff and totally disconnected space.}
			\end{itemize}
		\end{theorem}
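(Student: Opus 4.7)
The plan is to prove both directions separately, relying on the results established earlier in this section.

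For the implication (i) $\Rightarrow$ (ii), $\mathcal{C}^{\infty}$-reducedness is exactly \textbf{Theorem \ref{backy}}. To verify that $\mathrm{Spec}^{\infty}(A)$ is Boolean, I would exploit \textbf{Proposition \ref{proposition1}}(ii) to write each principal ideal as $(a) = (e)$ for an idempotent $e$, so $D^{\infty}(a) = D^{\infty}(e)$ and the basic opens admit a clopen refinement: the relations $e(1-e) = 0$ and $e + (1-e) = 1$ force $D^{\infty}(e)$ and $D^{\infty}(1-e)$ to partition $\mathrm{Spec}^{\infty}(A)$ into disjoint open sets. This clopen basis yields zero-dimensionality at once. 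Hausdorffness follows because two distinct primes are separated by some $a$ lying in only one of them, and then by its associated idempotent; compactness of the $\mathcal{C}^{\infty}$-spectrum is automatic.

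For (ii) $\Rightarrow$ (i), given $a \in A$, I would argue topologically first and then extract an algebraic identity. The basic open $D^{\infty}(a)$ is quasi-compact in the spectral space $\mathrm{Spec}^{\infty}(A)$; since the latter is Hausdorff, quasi-compact subsets are closed, so $D^{\infty}(a)$ is in fact clopen. Its complement $V(a)$ is then compact and open, and can be written as a finite union $V(a) = D^{\infty}(b_1) \cup \cdots \cup D^{\infty}(b_n)$. Each inclusion $D^{\infty}(b_i) \subseteq V(a)$ translates into $D^{\infty}(a b_i) = \varnothing$, hence $a b_i \in \sqrt[\infty]{(0)} = (0)$, and reducedness upgrades this to $a b_i = 0$. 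Moreover, the equality $D^{\infty}(a) \cup D^{\infty}(b_1) \cup \cdots \cup D^{\infty}(b_n) = \mathrm{Spec}^{\infty}(A)$ says that no $\mathcal{C}^{\infty}$-radical prime contains $\{a, b_1, \ldots, b_n\}$, which forces $(a, b_1, \ldots, b_n) = A$. A B\'ezout-style relation $1 = ca + \sum_i c_i b_i$ then gives, upon multiplying by $a$, the desired identity $a = a^2 c$ (since each $a b_i = 0$), which is condition (i) of \textbf{Definition \ref{arnaldo}}.

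The main obstacle I anticipate is the final algebraic extraction in (ii) $\Rightarrow$ (i): ensuring that ``no $\mathcal{C}^{\infty}$-radical prime contains $(a, b_1, \ldots, b_n)$'' genuinely forces that $\mathcal{C}^{\infty}$-ideal to be improper. This will use the characterization $\sqrt[\infty]{I} = \bigcap\{\mathfrak{p} \in \mathrm{Spec}^{\infty}(A) : I \subseteq \mathfrak{p}\}$ recalled in the preliminaries, together with the observation that $1 \in \sqrt[\infty]{I}$ forces $A/I \cong (A/I)\{(1+I)^{-1}\} \cong \{0\}$. Aside from that bookkeeping, the argument is a clean transfer of the classical commutative-algebra characterization of von Neumann regularity to the $\mathcal{C}^{\infty}$ setting, systematically replacing $\mathrm{Spec}$ with $\mathrm{Spec}^{\infty}$ and the ordinary nilradical with the $\mathcal{C}^{\infty}$-radical.
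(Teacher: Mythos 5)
Your proposal is correct and follows essentially the same route as the paper: (i)\,$\Rightarrow$\,(ii) via idempotent generators making each $D^{\infty}(a)=D^{\infty}(e)$ clopen with complement $D^{\infty}(1-e)$, and (ii)\,$\Rightarrow$\,(i) via clopenness and compactness of $D^{\infty}(a)$, writing its complement with basic opens $D^{\infty}(b_i)$ satisfying $ab_i=0$ by reducedness, and extracting $a=a^{2}c$ from a comaximality relation. The only (immaterial) difference is the endgame: the paper consolidates the complement into a single $D^{\infty}(d)$ and uses $D^{\infty}(a^{2}+d^{2})=D^{\infty}(1)$ to get $a^{2}+d^{2}\in\{1\}^{\infty-{\rm sat}}=A^{\times}$, whereas you keep the finite family and argue that $\sqrt[\infty]{(a,b_1,\dots,b_n)}=A$ forces the ideal to be improper, yielding a B\'ezout relation --- both steps are valid and interchangeable.
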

		\begin{proof}
			Ad $(i) \to (ii)$: it follows from item (1) of \textbf{Theorem \ref{bender}}.

			Since ${\rm Spec}^{\infty}\,(A)$ is a spectral space, we only need to show that $\mathcal{B} = \{ D^{\infty}(a) \mid a \in A \}$ is a clopen basis for its topology.
			
			Given any $a \in A$, since $A$ is a von Neumann regular $\mathcal{C}^{\infty}-$ring, there is some idempotent element $e \in A$ such that $(a)=(e)$, so $D^{\infty}(a) = D^{\infty}(e)$. We claim that ${\rm Spec}^{\infty}\,(A) \setminus D^{\infty}(e) = D^{\infty}(1-e)$, hence $D^{\infty}(a) = D^{\infty}(e)$ is a clopen set.
			
			In fact, from item (iii) of \textbf{Lemma 1.2} of \cite{rings2}, $D^{\infty}(e) \cap D^{\infty}(1-e) = D^{\infty}(e\cdot(1-e)) = D^{\infty}(0) = \{ \mathfrak{p} \in {\rm Spec}^{\infty}\,(A) \mid 0 \notin \mathfrak{p} \} = \varnothing$.  Moreover, for every prime ideal $\mathfrak{p}$ we have $e \notin \mathfrak{p}$ or $(1-e) \notin \mathfrak{p}$ (for if this was not the case, we would have a prime ideal $\mathfrak{p}_0$ such that $e \in \mathfrak{p}_0$ and $(1-e) \in \mathfrak{p}_0$, so $1 = (1-e)+e \in \mathfrak{p}_0$, which would not be a proper ideal). Thus ${\rm Spec}^{\infty}(A) = D^{\infty}(e) \cup D^{\infty}(1-e)$.
			
			
			Ad $(ii) \to (i)$. Since ${\rm Spec}^{\infty}\,(A)$ is a Boolean space, it is a Hausdorff space and for every $a \in A$, $D^{\infty}(a)$ is compact, hence it is closed. Thus, we conclude that for every $a \in A$, $D^{\infty}(a)$ is a clopen set, and so  ${\rm Spec}^{\infty}\,(A) \setminus D^{\infty}(a)$ is a clopen subset of ${\rm Spec}^{\infty}\,(A)$.
			
			Now, for every clopen $C$ in ${\rm Spec}^{\infty}\,(A)$ there is some $b \in A$ such that $C = D^{\infty}(b)$. Since $C$ is clopen in ${\rm Spec}^{\infty}\,(A)$, it is in particular an open set, and since $\{ D^{\infty}(a) \mid a \in A\}$ is a basis for the topology of ${\rm Spec}^{\infty}\,(A)$,  there is a family, $\{ b_i\in A \mid i \in I\}$, of elements of $A$ such that $C = \cup_{i \in I} D^{\infty}(b_i)$. Since $C$ is compact, there is a finite subset $I' \subseteq I$ such that $C = \cup_{i \in I'} D^{\infty}(b_i)$. Applying the item (iii) of \textbf{Lemma 1.4} of \cite{rings2}, we conclude that there is some element $b \in A$ such that $\bigcup_{i \in I'} D^{\infty}(b_i) = D^{\infty}(b).$
			
			Since ${\rm Spec}^{\infty}\,(A) \setminus D^{\infty}(a)$ is clopen, there is some $d \in A$ such that ${\rm Spec}^{\infty}\,(A) \setminus D^{\infty}(a) = D^{\infty}(d)$. We have $\varnothing = D^{\infty}(a) \cap D^{\infty}(d) = D^{\infty}(a\cdot d) = \{ \mathfrak{p} \in {\rm Spec}^{\infty}\,(A) \mid a \cdot d \notin \mathfrak{p} \}$, so $(\forall \mathfrak{p} \in {\rm Spec}^{\infty}\,(A))(a \cdot d \in \mathfrak{p})$,
			hence $a \cdot d \in \bigcap {\rm Spec}^{\infty}\,(A) = \sqrt[\infty]{(0)} = (0)$, where the last equality is due to the fact that $A$ is a $\mathcal{C}^{\infty}-$reduced $\mathcal{C}^{\infty}-$ring.
			
			We have, then, $a \cdot d = 0.$ Also, we have $D^{\infty}(a^2 + d^2) = D^{\infty}(a) \cup D^{\infty}(d) = {\rm Spec}^{\infty}\,(A) = D^{\infty}(1)$. By item (i) of \textbf{Lemma 1.4} of \cite{rings2}, $D^{\infty}(a^2+d^2) \subseteq D^{\infty}(1)$ implies $a^2+d^2 \in \{ 1\}^{\infty-{\rm sat}}$. Since $\{ 1\}^{\infty-{\rm sat}} = A^{\times}$, it follows that $a^2+d^2 \in A^{\times}$, so there is some $y \in A$ such that $y \cdot (a^2+d^2) = 1, ya^2 + yd^2 = 1$. Since $a \cdot d = 0$, we get $a(a^2y)+a(b^2y)= a \cdot 1 = a$$
			$$a^2 (a \cdot y) = a$. 
		\end{proof}

		The following proposition will be useful to characterize the von Neumann-regular $\mathcal{C}^{\infty}-$rings by means of the ring of global sections of the structure sheaf of its affine scheme.
		
		\begin{prop}\label{Phil} If a $\mathcal{C}^{\infty}-$ring $A$ is a von-Neumann-regular $\mathcal{C}^{\infty}-$ring and $\mathfrak{p} \in {\rm Spec}^{\infty}\,(A)$, then ${A}/{\mathfrak{p}} \cong A\{ {A \setminus \mathfrak{p}}^{-1}\}$ and both are $\mathcal{C}^{\infty}-$fields.
		\end{prop}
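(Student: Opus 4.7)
The plan is to exhibit mutually inverse $\mathcal{C}^{\infty}$-homomorphisms between $A/\mathfrak{p}$ and $A\{(A\setminus \mathfrak{p})^{-1}\}$ using their respective universal properties.

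First, I would observe that by item (2) of \textbf{Theorem \ref{bender}}, every $\mathfrak{p} \in {\rm Spec}^{\infty}(A)$ is a maximal ideal of $\widetilde{U}(A)$, so $\widetilde{U}(A/\mathfrak{p})$ is a field; hence $A/\mathfrak{p}$ is a $\mathcal{C}^{\infty}$-field. Writing $S := A \setminus \mathfrak{p}$, the quotient map $q : A \to A/\mathfrak{p}$ sends every $s \in S$ to a non-zero element of the field $A/\mathfrak{p}$, hence to a unit. The universal property of the $\mathcal{C}^{\infty}$-ring of fractions yields a unique $\mathcal{C}^{\infty}$-homomorphism $\widetilde{q} : A\{S^{-1}\} \to A/\mathfrak{p}$ with $\widetilde{q} \circ \eta_{S} = q$.

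The core step is to produce the inverse, by showing $\mathfrak{p} \subseteq \ker \eta_{S}$. Given $a \in \mathfrak{p}$, \textbf{Proposition \ref{proposition1}}(ii) provides an idempotent $e \in A$ with $(a) = (e)$; since $e \in (a) \subseteq \mathfrak{p}$ we have $e \in \mathfrak{p}$, and so $1-e \notin \mathfrak{p}$ (otherwise $1 = e + (1-e) \in \mathfrak{p}$). Hence $\eta_{S}(1-e) \in A\{S^{-1}\}^{\times}$; combined with $e(1-e)=0$ this forces $\eta_{S}(e) = 0$, and then $\eta_{S}(a) = 0$ because $a \in (e)$. Consequently $\eta_{S}$ descends to a $\mathcal{C}^{\infty}$-homomorphism $\bar{\eta} : A/\mathfrak{p} \to A\{S^{-1}\}$ with $\bar{\eta} \circ q = \eta_{S}$.

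Finally, the two maps are mutually inverse: from $\widetilde{q} \circ \bar{\eta} \circ q = \widetilde{q} \circ \eta_{S} = q$ and the surjectivity of $q$ we get $\widetilde{q} \circ \bar{\eta} = {\rm id}_{A/\mathfrak{p}}$; and from $\bar{\eta} \circ \widetilde{q} \circ \eta_{S} = \bar{\eta} \circ q = \eta_{S}$, together with the uniqueness clause of the universal property of $\eta_{S}$ (both $\bar{\eta} \circ \widetilde{q}$ and ${\rm id}$ are $\mathcal{C}^{\infty}$-endomorphisms of $A\{S^{-1}\}$ extending $\eta_{S}$), we conclude $\bar{\eta} \circ \widetilde{q} = {\rm id}_{A\{S^{-1}\}}$. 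Thus $A/\mathfrak{p} \cong A\{S^{-1}\}$ as $\mathcal{C}^{\infty}$-rings, and since $A/\mathfrak{p}$ is a $\mathcal{C}^{\infty}$-field, so is $A\{S^{-1}\}$. The only nontrivial point is the inclusion $\mathfrak{p} \subseteq \ker \eta_{S}$, which in ordinary commutative algebra uses that localizations of von Neumann regular rings at primes collapse to residue fields; here the idempotent trick from \textbf{Proposition \ref{proposition1}} handles it directly without invoking any additional classical lemma.
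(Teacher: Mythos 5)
Your proof is correct, and its key step is the same one the paper uses: for $a \in \mathfrak{p}$ take the idempotent $e$ with $(a)=(e)$, note $e \in \mathfrak{p}$ and $1-e \notin \mathfrak{p}$, and let $e\cdot(1-e)=0$ force $\eta_{S}(e)=0$. The packaging differs in a way worth noting. The paper argues by contradiction inside the localization, showing that the maximal ideal $\mathfrak{m}_{\mathfrak{p}}$ of $A\{(A\setminus\mathfrak{p})^{-1}\}$ is zero, and then simply declares the localization a $\mathcal{C}^{\infty}$-field; it never explicitly exhibits the isomorphism $A/\mathfrak{p} \cong A\{(A\setminus\mathfrak{p})^{-1}\}$ asserted in the statement, leaving that to the reader via the identification of the residue field. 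You instead prove directly that $\mathfrak{p} \subseteq \ker \eta_{S}$ and then build the two mutually inverse $\mathcal{C}^{\infty}$-homomorphisms from the universal properties of the quotient and of the ring of fractions, checking both composites. This buys you a complete proof of the isomorphism claim, not just of the field claim, and it avoids the \emph{ab absurdo} detour; the only extra input you need is Theorem \ref{bender}(2) to know $A/\mathfrak{p}$ is already a field, which is legitimately available at that point in the paper.
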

		\begin{proof}
			We are going to show that the only maximal ideal of $A\{ {A \setminus \mathfrak{p}}^{-1}\}$, $\mathfrak{m}_{\mathfrak{p}}$ is such that $\mathfrak{m}_{\mathfrak{p}} \cong \{ 0\}$.
			
			Let $\eta_{\mathfrak{p}}: A \to A\{ {A \setminus \mathfrak{p}}^{-1}\}$ be the localization morphism of $A$ with respect to $A \setminus \mathfrak{p}$. We have $\mathfrak{m}_{\mathfrak{p}} = \langle \eta_{\mathfrak{p}}[A \setminus \mathfrak{p}]\rangle = \left\{ {\eta_{\mathfrak{p}}(a)}/{\eta_{\mathfrak{p}}(b)} \mid (a \in \mathfrak{p})\&(b \in A \setminus \mathfrak{p})  \right\}$. We must show that for every $a \in \mathfrak{p}$, $\eta_{\mathfrak{p}}(a) = 0$, which is equivalent, by \textbf{Theorem 1.4} of \cite{rings2}, to assert that for every $a \in \mathfrak{p}$ there is some $c \in (A \setminus \mathfrak{p})^{\infty-{\rm sat}} = A\setminus \mathfrak{p}$  such that $c \cdot a = 0$ in $A$.
			
			\textit{Ab absurdo}, suppose $\mathfrak{m}_{\mathfrak{p}} \neq \{ 0\}$, so there is $a \in \mathfrak{p}$ such that $\eta_{\mathfrak{p}}(a)\neq 0$, i.e., such that for every $c \in A \setminus \mathfrak{p}$, $c \cdot a \neq 0$. Since $A$ is a von Neumann-regular $\mathcal{C}^{\infty}-$ring, for this $a$ there is some idempotent $e \in \mathfrak{p}$ such that $(a) = (e)$.
			
			Since $a \in (e)$, there is some $\lambda \in A$ such that $a = \lambda \cdot a$, hence:
			$$0 \neq \eta_{\mathfrak{p}}(a) = \eta_{\mathfrak{p}}(\lambda \cdot e) = \eta_{\mathfrak{p}}(\lambda) \cdot \eta_{\mathfrak{p}}(e)$$
			and $\eta_{\mathfrak{p}}(e) \neq 0$.
			
			Since $\eta_{\mathfrak{p}}(e) \neq 0$,
			\begin{equation}\label{eq1}
				(\forall d \in A\setminus \mathfrak{p})(d \cdot e \neq 0).
			\end{equation}
			
			Since $e$ is an idempotent element, $1-e \notin \mathfrak{p}$, for if $1-e \in \mathfrak{p}$ then $e + (1 - e) = 1 \in \mathfrak{p}$ and $\mathfrak{p}$ would not be a proper prime ideal.
			
			We have also:
			\begin{equation}\label{eq2}
				e \cdot (1 - e) = 0,
			\end{equation}
			
			The equation \eqref{eq2} contradicts \eqref{eq1}, so $\mathfrak{m}_{\mathfrak{p}} \cong \{ 0\}$ and $A\{{A \setminus \mathfrak{p}}^{-1}\}$ is a $\mathcal{C}^{\infty}-$field.
		\end{proof}

		As a consequence, we register another proof of $(iii)  \rightarrow (i)$ of {\bf Proposition \ref{dan}}.
		
		\begin{cor}\label{fischer}Let $\mathfrak{A}=(A,\Phi)$ be a local von Neumann-regular $\mathcal{C}^{\infty}-$\-ring. Then $\mathfrak{A}$ is a $\mathcal{C}^{\infty}-$field.
		\end{cor}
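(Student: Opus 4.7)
The plan is to invoke \textbf{Proposition \ref{Phil}} at the unique maximal ideal and then identify the resulting localization with $A$ itself. Let $\mathfrak{m}$ be the unique maximal ideal of the local $\mathcal{C}^{\infty}$-ring $A$. First, I would verify that $\mathfrak{m}$ lies in ${\rm Spec}^{\infty}(A)$: since $A$ is von Neumann-regular, item (2) of \textbf{Theorem \ref{bender}} gives ${\rm Spec}^{\infty}(A)={\rm Specm}(\widetilde{U}(A))={\rm Spec}(\widetilde{U}(A))$, so every maximal ideal (in particular $\mathfrak{m}$) is a smooth prime ideal.

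Next, I would apply \textbf{Proposition \ref{Phil}} to $\mathfrak{p}=\mathfrak{m}$, which yields
\[
A/\mathfrak{m}\;\cong\;A\{(A\setminus\mathfrak{m})^{-1}\},
\]
and asserts that both are $\mathcal{C}^{\infty}$-fields. The remaining task is to identify $A$ itself with this localization. Because $A$ is local with maximal ideal $\mathfrak{m}$, every element of $A\setminus\mathfrak{m}$ is already a unit of $A$; hence the identity $\operatorname{id}_A\colon A\to A$ satisfies the universal property defining $\eta_{A\setminus\mathfrak{m}}\colon A\to A\{(A\setminus\mathfrak{m})^{-1}\}$ (recalled in the second preliminary remark). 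By uniqueness, $\eta_{A\setminus\mathfrak{m}}$ is an isomorphism, so $A\cong A\{(A\setminus\mathfrak{m})^{-1}\}$ is a $\mathcal{C}^{\infty}$-field.

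There is no real obstacle here; the corollary is essentially a direct consequence of \textbf{Proposition \ref{Phil}}. The only point that requires a moment of care is the identification of $A$ with its localization at $A\setminus\mathfrak{m}$, which hinges on the universal property of the $\mathcal{C}^{\infty}$-ring of fractions combined with the fact that units are preserved by the identity map. Once this identification is made, the conclusion is immediate.
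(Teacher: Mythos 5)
Your proposal is correct and follows essentially the same route as the paper's own (sketched) proof: apply \textbf{Proposition \ref{Phil}} at the unique maximal ideal $\mathfrak{m}$ and identify $A\{(A\setminus\mathfrak{m})^{-1}\} = A\{(A^{\times})^{-1}\} \cong A$ via the universal property of the ring of fractions. Your added check that $\mathfrak{m}$ is a smooth prime ideal (via \textbf{Theorem \ref{bender}}) is a small but welcome bit of extra care that the paper leaves implicit.
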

		\begin{proof}(Sketch)Let  $\mathfrak{m} \subseteq A$ be the unique maximal ideal of $A$. By \textbf{Proposition \ref{Phil}}, since $A$ is  von Neumann-regular, $A_{\mathfrak{m}} \cong {A}/{\mathfrak{m}}$, which is a $\mathcal{C}^{\infty}-$field.
			Also, $A_{\mathfrak{m}} = A\{ A \setminus \mathfrak{m}^{-1}\} = A\{ {A^{\times}}^{-1}\} \cong A$, and since $A_{\mathfrak{m}}$ is isomorphic to a $\mathcal{C}^{\infty}-$field, it follows that $A$ is a $\mathcal{C}^{\infty}-$field.
		\end{proof}

		Summarizing, we have the following result:
		
		\begin{theorem}\label{lab}If $A$ is a von Neumann-regular $\mathcal{C}^{\infty}-$ring, then the set ${\rm Spec}^{\infty}\,(A)$ with the smooth Zariski topology, ${\rm Zar}^{\infty}$, is a Boolean topological space, by  \textbf{Theorem \ref{ota}}. Moreover, by \textbf{Proposition \ref{Phil}}, for every $\mathfrak{p} \in {\rm Spec}^{\infty}\,(A)$,
			$$A_{\mathfrak{p}} = \varinjlim_{a \notin \mathfrak{p}} A\{ a^{-1}\} \cong A\{ A\setminus \mathfrak{p}^{-1}\}$$
			is a $\mathcal{C}^{\infty}-$field.
		\end{theorem}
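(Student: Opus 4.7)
The plan is essentially to assemble Theorem \ref{lab} from the two results it explicitly cites, since the statement is really a summary/consolidation rather than an independent claim. I would split the work into the two assertions and dispatch each by invoking the appropriate preceding result.

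For the first assertion, that $({\rm Spec}^{\infty}(A), {\rm Zar}^{\infty})$ is a Boolean space, I would appeal directly to the implication $(i)\Rightarrow (ii)$ of \textbf{Theorem \ref{ota}}. That theorem was already proved under the standing hypothesis that $A$ is $\mathcal{C}^{\infty}$-reduced (which, for von Neumann-regular $A$, holds by \textbf{Theorem \ref{backy}} and appears as item (1) of \textbf{Theorem \ref{bender}}), so nothing further is needed beyond a citation.

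For the second assertion, there are two identifications to record. The equality $A_{\mathfrak{p}} = A\{(A\setminus\mathfrak{p})^{-1}\}$ is the definition of localization at the prime $\mathfrak{p}$. The colimit presentation $A_{\mathfrak{p}}\cong \varinjlim_{a\notin\mathfrak{p}} A\{a^{-1}\}$ I would obtain by writing $A\setminus\mathfrak{p} = \bigcup_{S'\subset_{\rm fin}A\setminus\mathfrak{p}} S'$ and invoking the general fact, recorded in the preliminaries, that $A\{S^{-1}\} \cong \varinjlim_{S'\subset_{\rm fin} S} A\{(S')^{-1}\}$; for each such finite $S'$, taking $a := \prod S'\in A\setminus\mathfrak{p}$ (the complement of a prime is multiplicatively closed) gives $A\{(S')^{-1}\}\cong A\{a^{-1}\}$, so the diagram reindexes into a directed system indexed by $\{a : a\notin\mathfrak{p}\}$ ordered by divisibility. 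Finally, to see that this common $\mathcal{C}^{\infty}$-ring is a $\mathcal{C}^{\infty}$-field, I would cite \textbf{Proposition \ref{Phil}}, which yields $A\{(A\setminus\mathfrak{p})^{-1}\}\cong A/\mathfrak{p}$ and that both are $\mathcal{C}^{\infty}$-fields.

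There is no real obstacle here; the only mild bookkeeping point is the passage from the filtered colimit over finite subsets of $A\setminus\mathfrak{p}$ to the filtered colimit over single elements $a\notin\mathfrak{p}$, and this is purely formal once one uses multiplicative closure of $A\setminus\mathfrak{p}$. The theorem is best presented as a clean restatement that packages \textbf{Theorem \ref{ota}} and \textbf{Proposition \ref{Phil}} together for later reference.
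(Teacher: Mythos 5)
Your proposal is correct and matches the paper's treatment: the paper gives no separate proof of Theorem \ref{lab}, presenting it as a summary whose two assertions are discharged by citing Theorem \ref{ota} and Proposition \ref{Phil}, exactly as you do. Your extra remark on reindexing the filtered colimit from finite subsets of $A\setminus\mathfrak{p}$ to single elements via multiplicative closure is a correct (and slightly more careful) elaboration of a step the paper leaves implicit.
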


		The above theorem suggests us that von Neumann-regular $\mathcal{C}^{\infty}-$rings behave much like ordinary von Neumann-regular commutative unital rings. In the next sections we are going to explore this result using sheaf theoretic machinery.

		\begin{prop}\label{proposition6}The limit in $\mathcal{C}^{\infty}{\rm \bf Ring}$ of a diagram of von Neumann-regular $\mathcal{C}^{\infty}$\-rings is a von Neumann-regular $\mathcal{C}^{\infty}-$ring. In particular, $\mathcal{C}^{\infty}{\rm \bf vNRing}$ is a complete category and the inclusion functor from the category of all von Neumann regular $\mathcal{C}^{\infty}-$rings, $\mathcal{C}^{\infty}{\rm \bf vNRing}$, to $\mathcal{C}^{\infty}{\rm \bf Ring}$  preserves all limits.
		\end{prop}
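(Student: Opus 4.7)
The plan is to exploit the unique pseudo-inverse characterization (iii) from \textbf{Proposition \ref{proposition1}}, which turns von Neumann regularity into the existence of a canonical unary operation $a \mapsto a^{*}$ on the underlying set satisfying $a = a^{2}a^{*}$ and $a^{*} = (a^{*})^{2}a$. Because this operation is uniquely determined by equations involving only the $\mathcal{C}^{\infty}$-ring structure, it will automatically be compatible with every $\mathcal{C}^{\infty}$-homomorphism, and hence with the structure maps of any diagram.

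In detail, let $D : \mathcal{J} \to \mathcal{C}^{\infty}{\rm \bf vNRing}$ be a small diagram with objects $A_i$, and let $L$ together with projections $\pi_i : L \to A_i$ be its limit computed in $\mathcal{C}^{\infty}{\rm \bf Ring}$ (which exists since $\mathcal{C}^{\infty}{\rm \bf Ring}$ is a variety of algebras, hence complete, with limits created by the forgetful functor to $\mathbf{Set}$). Concretely, $L$ is realized as a $\mathcal{C}^{\infty}$-subring of $\prod_{i} A_{i}$ consisting of the tuples $(a_i)_i$ compatible with all arrows $D(\alpha) : A_i \to A_j$ in the diagram. I first claim $L$ lies in $\mathcal{C}^{\infty}{\rm \bf vNRing}$: given $a = (a_i)_i \in L$, each $A_i$ provides a unique $b_i \in A_i$ with $a_i = a_i^{2}b_i$ and $b_i = b_i^{2}a_i$; for any arrow $\alpha : i \to j$ in $\mathcal{J}$, applying $D(\alpha)$ to both equations and using that $D(\alpha)$ is a $\mathcal{C}^{\infty}$-homomorphism yields $D(\alpha)(a_i) = D(\alpha)(a_i)^{2} D(\alpha)(b_i)$ and $D(\alpha)(b_i) = D(\alpha)(b_i)^{2}D(\alpha)(a_i)$, that is, $a_j = a_j^{2}\,D(\alpha)(b_i)$ and $D(\alpha)(b_i) = D(\alpha)(b_i)^{2}a_j$. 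By the \emph{uniqueness} clause in (iii) applied inside $A_j$, we conclude $D(\alpha)(b_i) = b_j$. Hence $b := (b_i)_i$ is a compatible tuple, $b \in L$, and $a = a^{2}b$ holds componentwise in $L$, so $L$ satisfies condition (i) of \textbf{Definition \ref{arnaldo}}.

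For the second assertion, since $\mathcal{C}^{\infty}{\rm \bf vNRing}$ is a \emph{full} subcategory of $\mathcal{C}^{\infty}{\rm \bf Ring}$ and the object $L$ computed as a limit in $\mathcal{C}^{\infty}{\rm \bf Ring}$ already lies in $\mathcal{C}^{\infty}{\rm \bf vNRing}$, a standard argument shows that $(L, (\pi_i)_i)$ is also a limit of $D$ in the subcategory: any cone $(C, (\varphi_i)_i)$ in $\mathcal{C}^{\infty}{\rm \bf vNRing}$ is a cone in $\mathcal{C}^{\infty}{\rm \bf Ring}$, produces a unique mediating $\mathcal{C}^{\infty}$-homomorphism $C \to L$, and fullness ensures this is a morphism of $\mathcal{C}^{\infty}{\rm \bf vNRing}$. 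Hence $\mathcal{C}^{\infty}{\rm \bf vNRing}$ is complete and $\imath$ preserves (indeed creates) all small limits.

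The only delicate point — the one I would call the main obstacle — is verifying that the componentwise pseudo-inverse $(b_i)_i$ is actually an element of $L$, i.e., that it respects the transition maps of the diagram. This is precisely where the \emph{uniqueness} in characterization (iii) of \textbf{Proposition \ref{proposition1}} is indispensable: without it, one could only transport each $b_i$ forward to \emph{some} pseudo-inverse of $a_j$, not necessarily the chosen $b_j$, and the tuple would fail to define an element of the limit.
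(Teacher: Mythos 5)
Your proof is correct and rests on exactly the same key fact as the paper's: the quasi-inverse $b$ of $a$ characterized by $a=a^2b$, $b=b^2a$ is unique, hence preserved by every $\mathcal{C}^{\infty}$-homomorphism, so the limit is closed under quasi-inversion. The paper merely packages this by reducing general limits to products and equalizers and checking each, whereas you verify compatibility of the tuple $(b_i)_i$ directly; this is a presentational difference only.
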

		\begin{proof}(Sketch) It is clear from the definition that the class $\mathcal{C}^{\infty}{\rm \bf vNRing}$ of von Neumann-regular $\mathcal{C}^{\infty}-$rings is closed under arbitrary products in the class $\mathcal{C}^{\infty}{\rm \bf Ring}$, of all $\mathcal{C}^{\infty}-$rings. Thus it suffices to show that it is closed under equalizers. 
			
			So let $A, B$ be von Neumann-regular rings and $ f,g: A \to B$ be
			$\mathcal{C}^{\infty}-$homomorphisms. Their equalizer in $\mathcal{C}^{\infty}{\rm \bf Ring}$ is given by the set $E = \{ a \in A \mid f(a)=g(a)\}$, endowed with the restricted ring operations from $A$.
			
			To see that E is von Neumann-regular, we need to show that for $a \in E$, the (unique) element $b$ satisfying  $ab^2 = b$ and $a^2b = a$ also belongs to $E$. But this is true since the quasi-inverse element is unique and is preserved under $\mathcal{C}^{\infty}-$homomorphisms.
		\end{proof}
		
		\begin{prop}\label{proposition8} The category $\mathcal{C}^{\infty}{\rm \bf vNRing}$ is the smallest subcategory of $\mathcal{C}^{\infty}{\rm \bf Ring}$ closed under limits containing all $\mathcal{C}^{\infty}-$fields.
		\end{prop}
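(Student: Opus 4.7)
The proof splits into two inclusions. For the easy direction, every $\mathcal{C}^{\infty}-$field is von Neumann-regular (take $x = a^{-1}$ if $a \neq 0$, else $x = 0$), and by \textbf{Proposition \ref{proposition6}} the category $\mathcal{C}^{\infty}{\rm \bf vNRing}$ is closed under limits in $\mathcal{C}^{\infty}{\rm \bf Ring}$. Thus $\mathcal{C}^{\infty}{\rm \bf vNRing}$ is itself a limit-closed full subcategory of $\mathcal{C}^{\infty}{\rm \bf Ring}$ containing all $\mathcal{C}^{\infty}-$fields, and so the smallest such subcategory is contained in it.

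For the reverse inclusion, let $\mathcal{D}$ be an arbitrary limit-closed full subcategory of $\mathcal{C}^{\infty}{\rm \bf Ring}$ containing the $\mathcal{C}^{\infty}-$fields, and let $A$ be a von Neumann-regular $\mathcal{C}^{\infty}-$ring. The plan is to exhibit $A$, in $\mathcal{C}^{\infty}{\rm \bf Ring}$, as the limit of a diagram consisting entirely of $\mathcal{C}^{\infty}-$fields, thereby forcing $A \in \mathcal{D}$. By \textbf{Proposition \ref{Phil}}, each quotient $K_{\mathfrak{p}} := A/\mathfrak{p}$ is a $\mathcal{C}^{\infty}-$field, and by \textbf{Theorem \ref{backy}} the canonical map $\iota: A \hookrightarrow P := \prod_{\mathfrak{p} \in {\rm Spec}^{\infty}(A)} K_{\mathfrak{p}}$ is injective; since $\mathcal{D}$ is closed under arbitrary products, $P \in \mathcal{D}$. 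To locate $\iota(A)$ inside $P$ as a limit of objects already in $\mathcal{D}$, I would consider the family of all pairs $(\alpha,\beta)$ of $\mathcal{C}^{\infty}-$homomorphisms $\alpha,\beta: P \rightrightarrows K$ into a $\mathcal{C}^{\infty}-$field $K$ satisfying $\alpha \circ \iota = \beta \circ \iota$; each such pair yields an equalizer ${\rm eq}(\alpha,\beta) \subseteq P$ lying in $\mathcal{D}$, and the wide intersection $E := \bigcap_{(\alpha,\beta)} {\rm eq}(\alpha,\beta)$ is then again a limit in $\mathcal{D}$ that contains $\iota(A)$ by construction.

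The main obstacle is the equality $E = \iota(A)$, which is a separation statement: for every $x \in P \setminus \iota(A)$, one must produce $\alpha,\beta: P \to K$ with $\alpha \circ \iota = \beta \circ \iota$ but $\alpha(x) \neq \beta(x)$. I would attack this via the sheaf-theoretic representation of $A$ as global sections of its structure sheaf on the Boolean space ${\rm Spec}^{\infty}(A)$ (\textbf{Theorems \ref{ota}, \ref{bender}, \ref{lab}}): an element of $P$ belongs to $\iota(A)$ iff its components admit a local-compatibility witness through the idempotents of $A$ (a Pierce-type representation), and failures of this condition translate, via the Stone duality between idempotents of $A$ and clopens of ${\rm Spec}^{\infty}(A)$, into field-valued separating maps $P \rightrightarrows K$. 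Carrying out this Stone-theoretic separation is the technical heart of the argument.
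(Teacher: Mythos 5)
Your first inclusion and your overall architecture for the second are sound, and in fact they follow the same route as the paper: the paper also realizes a von Neumann-regular $A$ as a limit of a diagram built from the residue fields $k_{\mathfrak{p}}(A)$, citing \textbf{Lemma 2.5} of \cite{Ken} for the statement that the ring of global sections of a sheaf is a limit of a diagram of products and \emph{ultraproducts} of its stalks (the stalks here being $\mathcal{C}^{\infty}$-fields by \textbf{Proposition \ref{Phil}} and \textbf{Theorem \ref{lab}}, and ultraproducts of $\mathcal{C}^{\infty}$-fields being again $\mathcal{C}^{\infty}$-fields). However, your proof has a genuine gap exactly where you flag it: the equality $E = \iota(A)$ is the entire content of the argument, and you neither prove it nor reduce it to a citable result. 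As written, ``failures of this condition translate\ldots into field-valued separating maps'' is a restatement of what must be shown, not an argument.

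The missing idea is the ultraproduct construction. For each ultrafilter $\mathcal{U}$ on ${\rm Spec}^{\infty}(A)$, converging (by compactness and Hausdorffness) to a unique $\mathfrak{p}_0$, the $\mathcal{C}^{\infty}$-field $K_{\mathcal{U}} := \bigl(\prod_{\mathfrak{p}} K_{\mathfrak{p}}\bigr)/\mathcal{U}$ receives two maps from $P$: the canonical quotient $\alpha_{\mathcal{U}}$, and $\beta_{\mathcal{U}} := c \circ \pi_{\mathfrak{p}_0}$, where $c: K_{\mathfrak{p}_0} \to K_{\mathcal{U}}$ is the well-defined embedding induced by $a \mapsto [(a+\mathfrak{p})_{\mathfrak{p}}]_{\mathcal{U}}$ (its kernel is $\mathfrak{p}_0$ because the clopen sets $Z^{\infty}(a)$ containing $\mathfrak{p}_0$ lie in $\mathcal{U}$). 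These agree on $\iota(A)$, and the converse inclusion $E \subseteq \iota(A)$ then requires showing that if $x=(x_{\mathfrak{p}})$ is equalized by every such pair, then for each $\mathfrak{p}_0$ the set $\{\mathfrak{p} : x_{\mathfrak{p}} = a+\mathfrak{p}\}$ (for $a$ a lift of $x_{\mathfrak{p}_0}$) is a neighborhood of $\mathfrak{p}_0$, after which one covers ${\rm Spec}^{\infty}(A)$ by finitely many such clopens and glues the local lifts using the idempotents of $A$. None of this is in your proposal. (Incidentally, your ``all pairs into all fields'' formulation does subsume these pairs, since any $\mathcal{C}^{\infty}$-homomorphism from a product of fields to a field kills a maximal ideal corresponding to an ultrafilter; but that observation does not discharge the proof obligation.) To complete your argument you should either carry out this ultrafilter-plus-compactness separation explicitly or invoke \textbf{Lemma 2.5} of \cite{Ken} as the paper does.
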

		
		\begin{proof}
			Clearly all $\mathcal{C}^{\infty}-$fields are von Neumann-regular $\mathcal{C}^{\infty}-$rings, and by \textbf{Proposition \ref{proposition6}} so are limits of $\mathcal{C}^{\infty}-$fields. Thus $\mathcal{C}^{\infty}{\rm \bf vNRing}$ contains all limits of $\mathcal{C}^{\infty}-$fields. On the other hand the ring of global sections of a sheaf can be expressed as a limit of a diagram of products and ultraproducts of the stalks (by \textbf{Lemma 2.5} of \cite{Ken}). All these occurring (ultra)products are von Neumann-regular $\mathcal{C}^{\infty}-$rings as well and hence so is their limit, by \textbf{Proposition \ref{proposition6}}.
		\end{proof}

		\section{Von Neumann-regular $\mathcal{C}^{\infty}-$Rings and Boolean Algebras}
		

		In this section we also apply von Neumann regular $\mathcal{C}^{\infty}$-ring to naturally represent Boolean Algebras in a strong sense: i.e., not only all Boolean algebras are isomorphic to the Boolean algebra of idempotents of a von Neumann regular $\mathcal{C}^{\infty}$-ring, as every homomorphism between such Boolean algebras of idempotents is (essentially) induced by a $\mathcal{C}^{\infty}$-homomorphism.

		
		
		\begin{remark}\label{stone-dual}By Stone Duality, there is an anti-equivalence of categories between the category of Boolean algebras, ${\rm \bf BA}$, and the category of Boolean spaces, ${\rm \bf BoolSp}$.
			
			Under this anti-equivalence, a Boolean space $(X,\tau)$ is mapped to the Boolean algebra of clopen subsets of $(X,\tau)$, ${\rm Clopen}\,(X)$:
			
			$$\begin{array}{cccc}
				{\rm Clopen}: & {\rm \bf BoolSp} & \rightarrow & {\rm \bf BA} \\
				& \xymatrix{(X,\tau) \ar[r]^{f}& (Y,\sigma)} & \mapsto & \xymatrix{{\rm Clopen}\,(Y) \ar[r]^{f^{-1}\upharpoonright}& {\rm Clopen}\,(X)}
			\end{array}$$
			
			The quasi-inverse functor is given by the Stone space functor: a Boolean algebra $B$ is mapped to the Stone space of $B$, ${\rm Stone}(B) = (\{ U \subseteq B: U \,\, \text{is an ultrafilter in} \,\, B\}, \tau_B)$ , where $\tau_B$ is the topology whose basis is given by the image of the map $t_B : B \to \mathscr{P}\,({\rm Stone}(B))$ (the set of all subsets of ${\rm Stone}(B)$), $b \mapsto t_B(b) = S_B(b) = \{ U \in {\rm Stone}(B) : b \in U\}$.
			
			$$\begin{array}{cccc}
				{\rm Stone}: & {\rm \bf BA} & \rightarrow & {\rm \bf BoolSp} \\
				& \xymatrix{B \ar[r]^{h}& B'} & \mapsto & \xymatrix{{{\rm Stone}(B')} \ar[r]^{h^{-1}\upharpoonright}& {{\rm Stone}(B)}}
			\end{array}$$

		\end{remark}
		
		\begin{remark}
			
			Let $(A', +', \cdot', 0', 1')$ be any commutative unital ring, $B(A')= \{ e \in A' \mid e^2 = e \}$ and denote by $\wedge', \vee', *', \leq', 0'$ and $1'$ its respective associated Boolean algebra operations, relations and constant symbols as constructed above. Note that for any commutative unital ring homomorphism $f: A \to A'$, the map $B(f):= f\upharpoonright_{B(A)}: B(A) \to B(A')$ is such that:
			
			\begin{itemize}
				\item[(i)]{$B(f)[B(A)] \subseteq B(A')$;}
				\item[(ii)]{$(\forall e_1 \in A)(\forall e_2 \in A)(B(f)(e_1 \wedge e_2) = f\upharpoonright_{B(A)}(e_1 \cdot e_2) = (f\upharpoonright_{B(A)}(e_1))\cdot' (f\upharpoonright_{B(A)}(e_2))= B(f)(e_1)\wedge'B(f)(e_2))$}
				\item[(iii)]{$(\forall e_1 \in A)(\forall e_2 \in A)(B(f)(e_1 \vee e_2) = f\upharpoonright_{B(A)}(e_1 + e_2 - e_1 \cdot e_2) = (f\upharpoonright_{B(A)}(e_1))+' (f\upharpoonright_{B(A)}(e_2)) - f\upharpoonright_{B(A)}(e_1)\cdot f\upharpoonright_{B(A)}(e_2)= B(f)(e_1)\vee'B(f)(e_2))$}
				\item[(iv)]{$(\forall e \in B(A))(B(f)(e^{*}) = f\upharpoonright_{B(A)}(1-e)=f\upharpoonright_{B(A)}(1)-f\upharpoonright_{B(A)}(e) = 1' - f\upharpoonright_{B(A)}(e) = B(f)(e)^{*})$}
			\end{itemize}
			
			\noindent hence a morphism of Boolean algebras.
			
			We also have, for every ring $A$, $B({\rm id}_A) = {\rm id}_{B(A)}$ and given any $f: A \to A'$ and $f': A' \to A''$, $B(f' \circ f) = B(f')\circ B(f)$, since $B(f) = f\upharpoonright_{B(A)}$, so:
			
			$$\begin{array}{cccc}
				B: & {\rm \bf CRing} & \rightarrow & {\rm \bf BA} \\
				& A & \mapsto & B(A)\\
				& {\xymatrix{A \ar[r]^{f}& A'}} & \mapsto & \xymatrix{B(A) \ar[r]^{B(f)} & B(A')}
			\end{array}$$
			
			is a (covariant) functor.
			
		\end{remark}

		Since we can regard any $\mathcal{C}^{\infty}-$ring $A$ as a commutative unital ring via the forgetful functor $\widetilde{U}: \mathcal{C}^{\infty}{\rm \bf Ring} \to {\rm \bf CRing}$, we have a (covariant) functor:
		
		$$\begin{array}{cccc}
			\widetilde{B}: & \mathcal{C}^{\infty}{\rm \bf Ring} & \rightarrow & {\rm \bf BA} \\
			& A & \mapsto & \widetilde{B}(A):= (B \circ U)(A)\\
			& {\xymatrix{A \ar[r]^{f}& A'}} & \mapsto & \xymatrix{\widetilde{B}(A) \ar[r]^{\widetilde{B}(f)} & \widetilde{B}(A')}
		\end{array}$$

		Now, if $A$ is any $\mathcal{C}^{\infty}-$ring, we can define the following map:
		
		$$\begin{array}{cccc}
			\jmath_A: & \widetilde{B}(A) & \rightarrow & {\rm Clopen}\,({\rm Spec}^{\infty}(A)) \\
			& e & \mapsto & D^{\infty}(e) = \{ \mathfrak{p} \in {\rm Spec}^{\infty}\,(A)\mid e \notin \mathfrak{p}\}
		\end{array}$$
		
		\textbf{Claim 1:} The map defined above is a  Boolean algebra homomorphism.\\
		
		Note that for any $e \in B(A)$, $D^{\infty}(e^*) = D^{\infty}(1-e) = {\rm Spec}^{\infty}\,(A)\setminus D^{\infty}(e) = D^{\infty}(e)^{*}$, since:
		$$D^{\infty}(e) \cap D^{\infty}(1-e) = D^{\infty}\,(e \cdot (1-e)) = D^{\infty}(0) = \varnothing$$
		and
		\begin{multline*}D^{\infty}(e)\cup D^{\infty}(e*) = D^{\infty}(e)\cup D^{\infty}(1-e) = D^{\infty}(e^2 + (1-e)^2) = D^{\infty}(e + (1-e))=\\
			= D^{\infty}(1) = {\rm Spec}^{\infty}\,(A).\end{multline*}
		
		Hence $\jmath_A(e^{*}) = \jmath_A(1-e) = D^{\infty}(1-e) = {\rm Spec}^{\infty}\,(A) \setminus D^{\infty}\,(e) = {D^{\infty}(e)}^{*} = \jmath_A(e)^{*}$.

		By the item (iii) of \textbf{Lemma 1.4} of \cite{rings2}, $D^{\infty}(e \cdot e') = D^{\infty}(e) \cap D^{\infty}(e')$, so $\jmath_A(e \wedge e') = D^{\infty}(e\cdot e') = D^{\infty}(e) \cap D^{\infty}(e') = \jmath_A(e)\cap \jmath_A(e').$
		
		Last,
		\begin{multline*}
			\jmath_A(e \vee e') = \jmath_A(e + e' - e \cdot e') = D^{\infty}(e + e' - e \cdot e') = D^{\infty}(e^2)\cup D^{\infty}(e' - e \cdot e')= \\
			D^{\infty}(e)\cup D^{\infty}(e'\cdot(1 - e)) = D^{\infty}(e)\cup [D^{\infty}(e') \cap D^{\infty}(1-e)] = \\
			= [D^{\infty}(e) \cup D^{\infty}(e')] \cap [D^{\infty}(e) \cup D^{\infty}(1-e)] = [D^{\infty}(e) \cup D^{\infty}(e')]\cap {\rm Spec}^{\infty}\,(A) =\\
			= D^{\infty}(e) \cup D^{\infty}(e') = \jmath_A(e) \cup \jmath_A(e'),
		\end{multline*}

		\noindent and the claim is proved.\\

		\textbf{Claim 2:} $\jmath_A: \widetilde{B}(A) \to \operatorname{Clopen}\,(\operatorname{Spec}^{\infty}(A))$ is an injective map.\\

		In order to prove that $\jmath_A : \widetilde{B}(A) \to {\rm Clopen}\,({\rm Spec}^{\infty}\,(A))$ is an injective map, it suffices to show that $\jmath_A^{\dashv}[{\rm Spec}^{\infty}\,(A)] = \{ 1\}.$\\
		
		In order to prove it, we need the following:\\
		
		\hspace{0.5cm}\textbf{Claim 2.1:} $(\jmath_A(e) = {\rm Spec}^{\infty}\,(A)) \iff (e \in A^{\times})$.\\
		
		In fact, $a \in A^{\times} \Rightarrow D^{\infty}(a) = {\rm Spec}^{\infty}\,(A)$. On the other hand, if $a \notin A^{\times}$ then there is some maximal $\mathcal{C}^{\infty}-$radical prime ideal $\mathfrak{m}\in {\rm Spec}^{\infty}\,(A)$ such that $a \in \mathfrak{m}$. If $\mathfrak{m}\notin D^{\infty}(a)$ then $\jmath_A(a) = D^{\infty}(a) \neq {\rm Spec}^{\infty}\,(A)$. Hence $\jmath_A(a) = {\rm Spec}^{\infty}\,(A) \Rightarrow a \in A^{\times}$, and the claim is proved. \\
		
		Let $e \in \widetilde{B}(A)$ be such that $\jmath_A(e) = D^{\infty}(e) = {\rm Spec}^{\infty}\,(A)$, so by \textbf{Claim 2.1}, it follows that $e \in A^{\times}$ and since $e$ is idempotent, $e = 1$, that is $\jmath_A^{\dashv}[\operatorname{Spec}^{\infty}\,(A)] = \{1\}$. \\
		
		The injective map $\jmath_A : \widetilde{B}(A) \to {\rm Spec}^{\infty}\,(A)$ suggests that the idempotent elements of the  Boolean algebra $\widetilde{B}(A)$ associated with a $\mathcal{C}^{\infty}-$ring $A$ hold a strong relationship with the canonical basis of the Zariski topology of ${\rm Spec}^{\infty}(A)$.  We are going to show that these idempotent elements, in the case of the von Neumann-regular $\mathcal{C}^{\infty}-$rings, represent \textit{all the Boolean algebras}.

		\begin{theorem}\label{jordao}Let $A$ be a von Neumann regular  $\mathcal{C}^{\infty}-$ring. The map:
			$$\begin{array}{cccc}
				\jmath_A: & \widetilde{B}(A) & \rightarrow & {\rm Clopen}\,({\rm Spec}^{\infty}(A)) \\
				& e & \mapsto & D^{\infty}(e) = \{ \mathfrak{p} \in {\rm Spec}^{\infty}\,(A)\mid e \notin \mathfrak{p}\}
			\end{array}$$
			is an isomorphism of Boolean algebras.
		\end{theorem}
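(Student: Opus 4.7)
The map $\jmath_A$ has already been shown, in the two inline Claims immediately preceding the theorem, to be a well-defined Boolean algebra homomorphism and to be injective. Thus the entire task reduces to establishing surjectivity, i.e.\ that every clopen subset of ${\rm Spec}^{\infty}(A)$ has the form $D^{\infty}(e)$ for some idempotent $e \in A$.

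My plan is to combine the topological description of ${\rm Spec}^{\infty}(A)$ provided by \textbf{Theorem \ref{ota}} with the finite-sum closure of the image of $\jmath_A$. First, since $A$ is von Neumann-regular, ${\rm Spec}^{\infty}(A)$ is a Boolean space by \textbf{Theorem \ref{ota}}, and the basic opens $D^{\infty}(a)$ are clopen. Given a clopen $C \subseteq {\rm Spec}^{\infty}(A)$, openness plus the fact that $\{D^{\infty}(a) : a \in A\}$ is a basis for the topology gives $C = \bigcup_{i \in I} D^{\infty}(a_i)$; closedness in the compact space ${\rm Spec}^{\infty}(A)$ makes $C$ itself compact, so the cover can be reduced to a finite one, $C = \bigcup_{i=1}^{n} D^{\infty}(a_i)$.

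Next, by condition (ii) of \textbf{Definition \ref{arnaldo}} (equivalently \textbf{Proposition \ref{proposition1}}), for each $a_i$ there is an idempotent $e_i \in A$ with $(a_i) = (e_i)$, hence $D^{\infty}(a_i) = D^{\infty}(e_i)$. The identities $D^{\infty}(e) \cup D^{\infty}(e') = D^{\infty}(e + e' - e e')$ and the fact that $e \vee e' := e + e' - e e'$ is again idempotent (this is exactly the join in $\widetilde{B}(A)$ verified in Claim 1) allow one to iterate and produce a single idempotent $e := e_1 \vee e_2 \vee \cdots \vee e_n \in \widetilde{B}(A)$ with
\[
C \;=\; \bigcup_{i=1}^{n} D^{\infty}(e_i) \;=\; D^{\infty}(e) \;=\; \jmath_A(e).
\]
This proves surjectivity, and together with the already established homomorphism and injectivity properties, $\jmath_A$ is an isomorphism of Boolean algebras.

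The only step that requires attention is the finite-cover reduction: it rests on $C$ being clopen in a compact Hausdorff (hence normal) space, which is precisely what \textbf{Theorem \ref{ota}} delivers under the von Neumann-regular hypothesis. Once that is in place, the rest is purely algebraic manipulation with idempotents and is already built into the Boolean algebra structure of $\widetilde{B}(A)$, so I do not foresee any genuine obstacle.
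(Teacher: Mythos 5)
Your proposal is correct and takes essentially the same route as the paper: both reduce surjectivity to writing a clopen set as a finite union of basic opens (via compactness of ${\rm Spec}^{\infty}(A)$, which the paper leaves implicit but you spell out) and then realizing that finite union as $D^{\infty}$ of a single idempotent. The only cosmetic difference is order of operations — the paper first collapses $D^{\infty}(a_1)\cup\cdots\cup D^{\infty}(a_n)$ to a single $D^{\infty}(b)$ using \textbf{Lemma 1.4} of \cite{rings2} and then replaces $b$ by an idempotent, whereas you replace each $a_i$ by an idempotent $e_i$ first and then take the Boolean join $e_1\vee\cdots\vee e_n$ in $\widetilde{B}(A)$.
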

		\begin{proof}
			We already know that $\jmath_A$ is an injective Boolean algebras homomorphism, so it suffices to prove that it is also surjective. This is achieved noting that given any $a_1, a_2, \cdots, a_n \in A$, there is some $b \in A$ such that $D^{\infty}(a_1)\cup D^{\infty}(a_2)\cup \cdots \cup D^{\infty}(a_n) = D^{\infty}(b)$, which is proved by induction, using item (iii) of \textbf{Lemma 1.4} of \cite{rings2} for the case $n=2$.\\
			
			Since $A$ is a von Neumann-ring, given this $b\in A$, there is an idempotent element, $e$, such that $(b) = (e)$, so $D^{\infty}(b)=D^{\infty}(e)$. Thus, $\jmath_A$ is surjective, as claimed.
		\end{proof}
		
		\begin{theorem}\label{uhum}We have the following diagram of categories, functors and a natural isomorphism:
			\begin{equation}
				\label{diagrama}
				\xymatrixcolsep{3pc}\xymatrix{
					\mathcal{C}^{\infty}{\rm \bf vNRing} \ar@/_/[ddrr]_{\widetilde{B}} \ar[rr]^{{\rm Spec}^{\infty}} & & {\rm \bf BoolSp} \ar[dd]^{{\rm Clopen}}\\
					& \ar@2[ur]_{\jmath} & \\
					& & {\rm \bf BA}
				}
			\end{equation}
		\end{theorem}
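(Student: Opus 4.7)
(Sketch) The plan is to verify that each arrow in the diagram is a well-defined functor, and that the collection $(\jmath_A)_{A \in {\rm Obj}\,(\mathcal{C}^{\infty}{\rm \bf vNRing})}$ assembles into a natural isomorphism between the covariant functors $\widetilde{B}$ and ${\rm Clopen} \circ {\rm Spec}^{\infty}$ from $\mathcal{C}^{\infty}{\rm \bf vNRing}$ to ${\rm \bf BA}$.

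First I would check that ${\rm Spec}^{\infty} : \mathcal{C}^{\infty}{\rm \bf vNRing} \to {\rm \bf BoolSp}$ is a (contravariant) functor. On objects, \textbf{Theorem \ref{ota}} guarantees that ${\rm Spec}^{\infty}\,(A)$ is a Boolean space whenever $A$ is von Neumann regular. On morphisms, any $\mathcal{C}^{\infty}$-homo\-morphism $f : A \to B$ induces the spectral map $f^{\star} : {\rm Spec}^{\infty}(B) \to {\rm Spec}^{\infty}(A)$, $\mathfrak{q} \mapsto f^{\dashv}[\mathfrak{q}]$, which is continuous (the preimage of $D^{\infty}(a)$ is $D^{\infty}(f(a))$) and respects composition and identities; since the codomain restriction to Boolean spaces is already ensured, this yields the required functor. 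The functors ${\rm Clopen} : {\rm \bf BoolSp} \to {\rm \bf BA}$ (cf. \textbf{Remark \ref{stone-dual}}) and $\widetilde{B} : \mathcal{C}^{\infty}{\rm \bf vNRing} \to {\rm \bf BA}$ were already described above, so the composite ${\rm Clopen} \circ {\rm Spec}^{\infty}$ is a covariant functor with the same source and target as $\widetilde{B}$.

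Next I would verify that $\jmath_A$ is a Boolean-algebra isomorphism for every object $A$ of $\mathcal{C}^{\infty}{\rm \bf vNRing}$; this is precisely the content of \textbf{Theorem \ref{jordao}}. It only remains to show naturality: for every $\mathcal{C}^{\infty}$-homomorphism $f : A \to B$ between von Neumann regular $\mathcal{C}^{\infty}$-rings, the square
\[
\xymatrixcolsep{3pc}\xymatrix{
\widetilde{B}(A) \ar[r]^{\widetilde{B}(f)} \ar[d]_{\jmath_A} & \widetilde{B}(B) \ar[d]^{\jmath_B}\\
{\rm Clopen}({\rm Spec}^{\infty}(A)) \ar[r]_{(f^{\star})^{-1}\upharpoonright} & {\rm Clopen}({\rm Spec}^{\infty}(B))
}
\]
commutes. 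This is a direct chase: for $e \in \widetilde{B}(A)$ one has $\jmath_B(\widetilde{B}(f)(e)) = \jmath_B(f(e)) = D^{\infty}(f(e))$, while $(f^{\star})^{-1}[\jmath_A(e)] = (f^{\star})^{-1}[D^{\infty}(e)] = \{ \mathfrak{q} \in {\rm Spec}^{\infty}(B) \mid e \notin f^{\dashv}[\mathfrak{q}]\} = \{ \mathfrak{q} \in {\rm Spec}^{\infty}(B) \mid f(e) \notin \mathfrak{q}\} = D^{\infty}(f(e))$, so both paths agree.

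There is no real obstacle in the argument; the only subtlety is a bookkeeping one, namely making sure that the variance of ${\rm Spec}^{\infty}$ and ${\rm Clopen}$ combine to give a covariant composite with the same variance as $\widetilde{B}$, so that the 2-cell $\jmath$ makes sense as a natural transformation between parallel covariant functors. Once this is set up correctly, combining the componentwise isomorphism from \textbf{Theorem \ref{jordao}} with the naturality square above yields the claimed natural isomorphism $\jmath : \widetilde{B} \Rightarrow {\rm Clopen} \circ {\rm Spec}^{\infty}$.
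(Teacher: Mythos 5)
Your proposal is correct and follows essentially the same route as the paper: both invoke \textbf{Theorem \ref{jordao}} for the componentwise isomorphism $\jmath_A$ and then verify naturality by the same direct computation $(f^{\star})^{-1}[D^{\infty}(e)] = D^{\infty}(f(e))$. The extra care you take with the variance bookkeeping and the functoriality of ${\rm Spec}^{\infty}$ is implicit in the paper but does not change the argument.
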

		\begin{proof}
			First note that since $A$ is a von Neumann-regular $\mathcal{C}^{\infty}-$ring, the set of the compact open subsets of (the Boolean space) ${\rm Spec}^{\infty}\,(A)$ equals ${\rm Clopen}\,({\rm Spec}^{\infty}\,(A))$.
			
			On the one hand, given a von Neumann regular $\mathcal{C}^{\infty}-$ring $A$, we have ${\rm Clopen}\,({\rm Spec}^{\infty}\,(A)) = \jmath_A[\widetilde{B}\,(A)] = \{ D^{\infty}(e) \mid e \in \widetilde{B}(A)\}$.
			
			For every von Neumann regular $\mathcal{C}^{\infty}-$ring $A$, by \textbf{Theorem \ref{jordao}}, we have the following isomorphism of Boolean algebras:
			
			$$\begin{array}{cccc}
				\jmath_A: & \widetilde{B}(A) & \rightarrow & {\rm Clopen}\,({\rm Spec}^{\infty}(A)) \\
				& e & \mapsto & D^{\infty}(e)
			\end{array}$$
			
			It is easy to see that for every $\mathcal{C}^{\infty}-$homomorphism $f: A \to A'$, we have the following commutative diagram:
			
			$$\xymatrixcolsep{3pc}\xymatrix{
				\widetilde{B}(A) \ar[d]_{\widetilde{B}(f)} \ar[r]^(0.37){\jmath_A} & {\rm Clopen}\,({\rm Spec}^{\infty}\,(A)) \ar[d]^{{\rm Clopen}\,({\rm Spec}^{\infty}(f))}\\
				\widetilde{B}(A') \ar[r]_(0.37){\jmath_{A'}} & {\rm Clopen}\,({\rm Spec}^{\infty}\,(A'))}$$
			
			In fact, given $e\in \widetilde{B}(A)$, we have, on the one hand, $\jmath_A(e)=D_{A}^{\infty}(e)$ and ${\rm Clopen}({\rm Spec}^{\infty}(f))(D_A^{\infty}(e)) = D_{A'}^{\infty}(f(e))$. On the other hand, $\jmath_{A'}\circ \widetilde{B}(f)(e) = \jmath_{A'}(f(e))= D_{A'}^{\infty}(f(e))$, so the diagram \eqref{diagrama} commutes.\\
			
			Thus, $\jmath$ is a natural transformation and $\xymatrix{\jmath: \widetilde{B} \ar@2[r] & {\rm Clopen}\circ {\rm Spec}^{\infty}}$ is a natural isomorphism and the diagram ``commutes'' (up to natural isomorphism).
		\end{proof}
		
		The following lemma is a well-known result, of which the authors could not find a proof anywhere in the current literature. The authors provide a proof in \textbf{Lemma 6} of \cite{berni2019neumann}.
		
		\begin{lemma}\label{niedia}Let $(X,\tau)$ be a Boolean topological space, and let:
			\begin{multline*}
				\mathcal{R} = \{ R \subseteq X \times X \mid (R \,\, \text{is an equivalence relation on} \, X)\& \\ \& (({X}/{R}) \, \text{is a discrete compact space}) \}
			\end{multline*}
			which is partially ordered by inclusion. Whenever $R_i, R_j \in \mathcal{R}$ are such that $R_j \subseteq R_i$, we have the continuous surjective map:
			$$\begin{array}{cccc}
				\mu_{R_jR_i}: & ({X}/{R_j}) & \twoheadrightarrow & ({X}/{R_i}) \\
				& [x]_{R_j} & \mapsto & [x]_{R_j}
			\end{array}$$
			so we have the inverse system $\{ ({X}/{R_i}) ; \mu_{R_jR_i}: ({X}/{R_j}) \to ({X}/{R_i}) \}$. By definition (see, for instance, \cite{james1966topology}),

			\begin{multline*}
				\varprojlim_{R \in \mathcal{R}} ({X}/{R}) = \lbrace ([x]_{R_i})_{R_i \in \mathcal{R}} \in \prod_{R \in \mathcal{R}} ({X}/{R}) \mid R_j \subseteq R_i \to ([x]_{R_i} = \mu_{R_jR_i}([x]_{R_j}) \rbrace
			\end{multline*}
			
			Let $X_{\infty}$ denote $\varprojlim_{R \in \mathcal{R}} \frac{X}{R}$, so we have the following cone:
			$$\xymatrix{
				& X_{\infty} \ar[dl]_{\mu_{R_j}} \ar[dr]^{\mu_{R_i}} & \\
				({X}/{R_j}) \ar[rr]^{\mu_{R_jR_i}} & & ({X}/{R_i})
			}$$
			
			We consider $X_{\infty}$ together with the induced subspace topology of $\prod_{R \in \mathcal{R}} ({X}/{R})$.\\
			
			By the universal property of $X_{\infty}$, there is a unique continuous map map $\delta: X \to X_{\infty}$ such that the following diagram commutes:
			
			$$\xymatrixcolsep{3pc}\xymatrix{
				& X \ar@/_2pc/[ddl]_{q_{R_j}} \ar@/^2pc/[ddr]^{q_{R_i}} \ar@{-->}[d]^{\exists ! \delta}& \\
				& X_{\infty} \ar[dl]_{\mu_{R_j}} \ar[dr]^{\mu_{R_i}} & \\
				({X}/{R_j}) \ar[rr]_{\mu_{R_jR_i}} & & ({X}/{R_i})
			}$$
			
			We claim that such a $\delta: X \to X_{\infty}$ is a homeomorphism, so:
			
			$$X \cong \varprojlim_{R \in \mathcal{R}} ({X}/{R})$$
			that is, $X$ a profinite space.
		\end{lemma}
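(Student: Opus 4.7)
The plan is to show that the canonical map $\delta : X \to X_{\infty}$ is a continuous bijection from a compact space to a Hausdorff space, and therefore a homeomorphism. Continuity of $\delta$ is built into its definition via the universal property of the limit: each composition $\mu_{R} \circ \delta = q_R$ is the quotient map, which is continuous, so $\delta$ is continuous. Moreover, $X_{\infty}$ is Hausdorff, since it inherits the subspace topology from the product $\prod_{R \in \mathcal{R}} (X/R)$ of discrete (hence Hausdorff) finite spaces.

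For injectivity, I would use that $X$, being compact, Hausdorff and totally disconnected, has a basis of clopen sets. Given $x \neq y$ in $X$, pick a clopen $U \subseteq X$ with $x \in U$ and $y \notin U$ and let $R_U$ be the equivalence relation whose classes are $U$ and $X \setminus U$. Then $X/R_U$ is a two-point discrete (hence compact discrete) space, so $R_U \in \mathcal{R}$, and $q_{R_U}(x) \neq q_{R_U}(y)$. Hence $\delta(x) \neq \delta(y)$.

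The main work lies in surjectivity. Given $([x_R]_R)_{R \in \mathcal{R}} \in X_{\infty}$, set $C_R := q_R^{-1}([x_R]_R)$. Since $X/R$ is discrete, the singleton $\{[x_R]_R\}$ is clopen, and hence each $C_R$ is a (nonempty) clopen subset of $X$. The family $\{C_R\}_{R \in \mathcal{R}}$ has the finite intersection property: given $R_1, \dots, R_n \in \mathcal{R}$, their intersection $R^{\ast} := R_1 \cap \cdots \cap R_n$ is again an equivalence relation, and the canonical injection $X/R^{\ast} \hookrightarrow \prod_{i=1}^{n} X/R_i$ shows $X/R^{\ast}$ is finite; the fact that each equivalence class of $R^{\ast}$ equals a finite intersection of clopen sets $[x]_{R_i}$ shows the quotient topology on $X/R^{\ast}$ is discrete, so $R^{\ast} \in \mathcal{R}$. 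The compatibility condition on $([x_R]_R)_R$ implies $C_{R^{\ast}} \subseteq C_{R_1} \cap \cdots \cap C_{R_n}$, and $C_{R^{\ast}}$ is nonempty. By compactness of $X$, the full intersection $\bigcap_{R \in \mathcal{R}} C_R$ is nonempty; any element $x$ of this intersection satisfies $q_R(x) = [x_R]_R$ for every $R$, whence $\delta(x) = ([x_R]_R)_R$.

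I expect the main technical obstacle to be the verification in the surjectivity step that $R^{\ast} = R_1 \cap \cdots \cap R_n$ lies in $\mathcal{R}$, in particular that the quotient topology on $X/R^{\ast}$ is discrete (rather than merely checking that $X/R^{\ast}$ is finite as a set). Once this is established, the compactness argument runs cleanly. Finally, a continuous bijection from the compact space $X$ onto the Hausdorff space $X_{\infty}$ is automatically closed, hence a homeomorphism, so $X \cong \varprojlim_{R \in \mathcal{R}} (X/R)$ as required.
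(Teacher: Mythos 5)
Your proposal is correct and complete: the standard argument that $\delta$ is a continuous bijection from a compact space onto a Hausdorff space, with injectivity supplied by the clopen basis of a Boolean space and surjectivity by the finite intersection property of the clopen fibres $C_R$ (using that $R_1\cap\cdots\cap R_n\in\mathcal{R}$, which you rightly flag and verify as the one point needing care). The paper itself gives no proof here, deferring entirely to \textbf{Lemma 6} of \cite{berni2019neumann}, so your argument in fact supplies the details the paper omits, and it is the expected one.
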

		\begin{proof}
			See \textbf{Lemma 6} of \cite{berni2019neumann}.
		\end{proof}

		Let ${\rm \bf BoolSp}$ be the category whose objects are all the Boolean spaces and whose morphisms are all the continuous functions between Boolean spaces. Given any Boolean space $(X,\tau)$, let
		\begin{multline*}\mathcal{R}_X = \{ R \subseteq X \times X \mid R\,\, \mbox{is \,\,\, an \,\,\, equivalence \,\,\, relation\,\,\, on}\,\, X \,\, \mbox{and} \\
			({X}/{R})\,\,\, \mbox{is \,\,\, discrete \,\,\, and \,\,\, compact}\}
		\end{multline*}
		
		We are going to describe an equivalence functor between \textbf{BoolSp} and the category of profinite topological spaces: this is a known result, but we cannot found a reference containing a detailed description. \\

		First we note that given any continuous function $f: X \to X'$ and any $R' \in \mathcal{R}_{X'}$,
		
		$$R_{f}:= (f \times f)^{\dashv}[R'] \subseteq X \times X$$
		
		\noindent is an equivalence relation on $X$ and the following diagram:
		
		$$\xymatrixcolsep{3pc}\xymatrix{
			X \ar[r]^{f} \ar[d]_{p_{R_f}} & X' \ar[d]^{p_R'}\\
			({X}/{R_f}) \ar[r]^{f_{R_fR'}} & ({X'}/{R'})}$$
		
		\noindent where $p_{R_f}: X \to ({X}/{R_f})$ and $p_{R'}: X' \to ({X'}/{R'})$ are the canonical projections, commutes.
		
		We know, by \textbf{Theorem 4.3} of \cite{james1966topology} that $f_{R_fR'}: ({X}/{R_f}) \to ({X'}/{R'})$ is a continuous map, and it is easy to see that $f_{R_fR'}$ is injective, as we are going to show.
		
		Given any $[x]_{R_f},[y]_{R_f} \in \dfrac{X}{R_f}$ such that $[x]_{R_f} \neq [y]_{R_f}$, \textit{i.e.}, such that $(x,y) \notin R_f$, we have $(f(x),f(y)) \notin R'$, \textit{i.e.}, $[f(x)]_{R'} \neq [f(y)]_{R'}$. Thus, since: $$f_{R_fR'}([x]_{R_f}) = (f_{R_fR'}\circ p_{R_f})(x) = (p_{R'} \circ f)(x) = [f(x)]_{R'}$$
		and
		$$f_{R_fR'}([y]_{R_f}) = (f_{R_fR'}\circ p_{R_f})(y) = (p_{R'} \circ f)(y) = [f(y)]_{R'}$$
		it follows that $f_{R_fR'}([x]_{R_f}) \neq f_{R_fR'}([y]_{R_f})$.
		
		Since $f_{R_fR'}: \frac{X}{R_f} \to \frac{X'}{R'}$ is an injective continuous map and $\frac{X'}{R'}$ is discrete, it follows that given any $[x']_{R'} \in \frac{X'}{R'}$:
		$$f_{R_fR'}^{\dashv}[\{ [x']_{R'}\}] = \begin{cases}
			\varnothing, \,\, \mbox{if}\,\, [x']_{R'} \notin f_{R_fR'}[X/R_f],\\
			\{ *\}, \,\, \mbox{otherwise}
		\end{cases},$$
		so every singleton of $\frac{X}{R_f}$ is an open subset of $\frac{X}{R_f}$, and $\frac{X}{R_f}$ is discrete. Also, since $X$ is compact, $\frac{X}{R_f}$ is compact, and it follows that and $R_f \in \mathcal{R}_X$.\\
		
		Now, if $R_1',R_2' \in \mathcal{R}_{X'}$ are such that $R_1' \subseteq R_2'$, then ${R_1'}_f \subseteq {R_2'}_f$. In fact, given $(x,y) \in {R_1'}_f$, we have $(f \times f)(x,y) \in R_1'$, and since $R_1' \subseteq R_2'$, it follows that $(f \times f)(x,y) \in R_2'$, so $(x,y) \in {R_2'}_f$.\\
		
		Let $X,Y,Z$ be Boolean spaces, $X \stackrel{f}{\rightarrow} Y$ and $Y \stackrel{g}{\rightarrow} Z$ be two Boolean spaces homomorphisms and $R \in \mathcal{R}_Z$. We easily see that $((g\circ f) \times (g \circ f))^{\dashv}[R] = (f \times f)^{\dashv}[(g \times g)^{\dashv}[R]].$\\
		
		
		Denoting $T:= (f \times f)^{\dashv}[(g \times g)^{\dashv}[R]]$ and $S:=(g \times g)^{\dashv}[R]$, we have the following commutative diagram:

		$$\xymatrixcolsep{3pc}\xymatrix{
			X \ar[r]^{f} \ar[d]_{\mu_T} & Y \ar[r]^{g} \ar[d]^{\mu_S} & Z \ar[d]^{\mu_R}\\
			({X}/{T}) \ar@/_3pc/[rr]_{(g\circ f)_{TR}}\ar[r]^{f_{TS}} & ({Y}/{S}) \ar[r]^{g_{SR}} & ({Z}/{R})
		}$$
		
		Given a continuous map between Boolean spaces, $f: X \to X'$, we can define a map $\check{f}: \varprojlim_{R \in \mathcal{R}_X}({X}/{R}) \to \varprojlim_{R' \in \mathcal{R}_{X'}} ({X'}/{R'})$ in a functorial manner.\\
		
		Let $R',S' \in \mathcal{R}_{X'}$ be any two equivalence relations such that  $R' \subseteq S'$, so given $f: X \to X'$ the following diagram commutes:
		
		$$\xymatrixcolsep{3pc}\xymatrix{
			({X}/{S_f'}) \ar[r]^{\mu_{R_f'S_f'}} \ar[d]_{f_{S_f'S'}} & ({X'}/{R_f'}) \ar[d]^{f_{R_f'R'}}\\
			({X'}/{S'}) \ar[r]^{\mu_{R'S'}} & ({X'}/{R'})
		}$$
		
		\noindent and since the diagram \eqref{diag3} commutes, the diagram \eqref{diag4} also commutes.

		\begin{equation}\label{diag3}
			\xymatrixcolsep{3pc}\xymatrix{
				& X_{\infty} \ar@/_/[dl]_{\mu_{S_f'}} \ar@/^/[dr]^{\mu_{R_f'}}& \\
				({X}/{S_f'}) \ar[rr]_{\mu_{R_f'S_f'}} & & ({X}/{R_f'})}
		\end{equation} \\ \begin{equation}\label{diag4}
			\xymatrixcolsep{3pc}\xymatrix{
				& X_{\infty} \ar@/_/[dl]_{f_{S_f'S'}\circ \mu_{S_f'}} \ar@/^/[dr]^{f_{R_f'R'}\mu_{R_f'}}& \\
				({X'}/{S'}) \ar[rr]_{\mu_{R'S'}} & & ({X'}/{R'})
		}\end{equation}

		By the universal property of ${X'}_{\infty}$, there is a unique $\check{f}: X_{\infty} \to {X'}_{\infty}$ such that the following prism is commutative:
		
		$$\xymatrix @!0 @R=4pc @C=6pc {
			X_{\infty} \ar[rr]^{\mu_{R_f'}} \ar[rd]^{\mu_{S_f'}} \ar@{.>}[dd]_{\exists ! \check{f}} && \frac{X}{R_f'} \ar[dd]^{f_{R_f'R'}} \\
			& \frac{X}{S_f'} \ar[ru]_{\mu_{R_f'S_f'}} \ar[dd]^(.3){f_{S_f'S'}} \\
			{{X'}_{\infty}} \ar[rr] |!{[ur];[dr]}\hole \ar[rd]^{\mu_{S'}'}^(.2){\mu_{R'}} && \frac{X'}{R'} \\
			& \frac{X}{S'} \ar[ru]_{\mu_{R'S'}} }$$

		It can be proven  that:
		
		$$\begin{array}{cccc}
			\Delta : & {\rm \bf BoolSp} & \rightarrow & {\rm\bf ProfinSp} \\
			& X & \mapsto & X_{\infty} = \varprojlim_{R \in \mathcal{R}_X} ({X}/{R})\\
			& \xymatrix{X \ar[r]^{f}&X'} & \mapsto & \xymatrix{X_{\infty} \ar[r]^{\check{f}}& {X'}_{\infty}}
		\end{array}$$
		
		\noindent is a functor using the uniqueness properties regarding $X_{\infty}$ and $\check{f}$ (for details, see \cite{berni2019neumann}).

		\begin{theorem} \label{boospvn} Let $\mathbb{K}$ be a $\mathcal{C}^{\infty}-$field.
			Following the notation of \textbf{Lemma \ref{niedia}}, define the
			contravariant functor:
			
			$$\begin{array}{cccc}
				\widehat{k}: & {\rm \bf BoolSp} & \rightarrow & \mathcal{C}^{\infty}{\rm \bf vNRing} \\
				& (X,\tau) & \mapsto & A_X:= \varinjlim_{R \in \mathcal{R}}
				\mathbb{K}^{U\left( \frac{X}{R}\right)}
			\end{array}$$
			
			Then there is a natural isomorphism:
			$$\epsilon : \operatorname{Id}_{\rm \bf BoolSp} \overset{\cong}{\Rightarrow} {\rm Spec}^{\infty}
			\circ \widehat{k}$$

			Therefore:
			\begin{itemize}
				\item{The functor $\widehat{k}$ is faithful;}
				\item{The functor ${\rm Spec}^{\infty}: \mathcal{C}^{\infty}{\rm \bf vNRing} \rightarrow {\rm \bf BoolSp}$  is ``full up to conjugation'';}
				\item{The functor ${\rm Spec}^{\infty}: \mathcal{C}^{\infty}{\rm \bf vNRing} \rightarrow {\rm \bf BoolSp}$  is isomorphism-dense.  In particular: for each $(X,\tau)$ be a Boolean space, there is a von Neumann-regular $\mathcal{C}^{\infty}-$ring, $A_X$, such that ${\rm Spec}^{\infty}\,(A_X) \approx X.$}
			\end{itemize}
		\end{theorem}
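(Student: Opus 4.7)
The plan is to construct the natural isomorphism $\epsilon$ explicitly by computing $\mathrm{Spec}^{\infty}(A_X)$ as the cofiltered limit of the spectra of the finite products $\mathbb{K}^{U(X/R)}$, and then invoking \textbf{Lemma \ref{niedia}} to identify this limit with $X$. Once $\epsilon$ is in hand, the three corollaries are formal consequences, the first two by standard categorical manipulation of natural isomorphisms and the third tautologically.

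First I would verify that $\widehat{k}$ is well-defined: for each $R \in \mathcal{R}_X$ the quotient $X/R$ is finite, so $\mathbb{K}^{U(X/R)}$ is a finite product of copies of the $\mathcal{C}^{\infty}-$field $\mathbb{K}$, which by the Example right after \textbf{Definition \ref{arnaldo}} is a von Neumann-regular $\mathcal{C}^{\infty}-$ring. Since the indexing category of $\mathcal{R}_X$ (ordered by reverse inclusion) is filtered and $\mathcal{C}^{\infty}{\rm \bf vNRing} \hookrightarrow \mathcal{C}^{\infty}{\rm \bf Ring}$ is closed under filtered colimits by \textbf{Proposition \ref{proposition4}}, $A_X = \varinjlim_{R \in \mathcal{R}_X}\mathbb{K}^{U(X/R)}$ is vN-regular. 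On morphisms, a continuous $f: X \to Y$ yields compatible surjections $f_R: X/(f\times f)^{\dashv}[R] \to Y/R$ (as constructed before Theorem \ref{boospvn}), hence pull-back $\mathcal{C}^{\infty}-$homomorphisms $\mathbb{K}^{U(Y/R)} \to \mathbb{K}^{U(X/(f\times f)^{\dashv}[R])}$; passing to the colimits defines $\widehat{k}(f): A_Y \to A_X$.

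Next I would compute the spectrum. Since $A_X$ is vN-regular, by \textbf{Theorem \ref{bender}} we may work with the ordinary Zariski spectrum. The spectrum functor turns filtered colimits of vN-regular $\mathcal{C}^{\infty}-$rings into cofiltered limits of Boolean spaces (this is a straightforward adaptation of the classical fact, using that $A_X$-prime ideals correspond under the colimit cocone to compatible families of primes in each $\mathbb{K}^{U(X/R)}$). Moreover, since $\mathbb{K}^{U(X/R)}$ is a finite product of $\mathcal{C}^{\infty}-$fields, its prime $\mathcal{C}^{\infty}-$radical ideals are precisely the kernels of the projections, so $\mathrm{Spec}^{\infty}(\mathbb{K}^{U(X/R)})$ is canonically homeomorphic to the finite discrete space $X/R$. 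Combining these identifications yields
\[
\mathrm{Spec}^{\infty}(A_X) \;\cong\; \varprojlim_{R \in \mathcal{R}_X} \mathrm{Spec}^{\infty}(\mathbb{K}^{U(X/R)}) \;\cong\; \varprojlim_{R \in \mathcal{R}_X} X/R \;=\; X_\infty,
\]
and composing with the canonical homeomorphism $\delta_X: X \to X_\infty$ from \textbf{Lemma \ref{niedia}} defines $\epsilon_X$. Concretely, $\epsilon_X(x) = \ker(\mathrm{ev}_x)$, where $\mathrm{ev}_x : A_X \to \mathbb{K}$ is induced at each level by the finite evaluation at $[x]_R \in X/R$.

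Naturality of $\epsilon$ reduces to the elementary identity $\mathrm{ev}_{f(x)} = \mathrm{ev}_x \circ \widehat{k}(f)$ at the level of finite evaluations, which is clear from the construction of $\widehat{k}(f)$ via the quotient maps $f_R$. Having established $\epsilon$ as a natural isomorphism of functors $\mathbf{BoolSp} \to \mathbf{BoolSp}$, the three consequences follow easily: faithfulness of $\widehat{k}$, since $\widehat{k}(f)=\widehat{k}(g)$ forces $\mathrm{Spec}^{\infty}(\widehat{k}(f))=\mathrm{Spec}^{\infty}(\widehat{k}(g))$ and naturality of $\epsilon$ then gives $f=g$; ``fullness up to conjugation'' of $\mathrm{Spec}^{\infty}$, since any continuous $g: \mathrm{Spec}^{\infty}(A_Y) \to \mathrm{Spec}^{\infty}(A_X)$ can be transported via $\epsilon$ to a continuous map $X \to Y$ whose $\widehat{k}$-image recovers $g$ up to conjugation by $\epsilon$; and isomorphism-density of $\mathrm{Spec}^{\infty}$, which is exactly the existence of $\epsilon_X$. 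The main obstacle I expect is the interchange of $\mathrm{Spec}^{\infty}$ with filtered colimits in $\mathcal{C}^{\infty}{\rm \bf vNRing}$: one must justify that a prime $\mathcal{C}^{\infty}-$radical ideal of $\varinjlim A_i$ pulls back to a compatible system of such ideals in the $A_i$, and conversely, which requires the characterisation of ${\rm Spec}^{\infty}$ via idempotents afforded by \textbf{Theorem \ref{ota}} together with the description of colimits of vN-regular $\mathcal{C}^{\infty}-$rings given in the proof of \textbf{Proposition \ref{proposition4}}.
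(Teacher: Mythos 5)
Your proposal follows essentially the same route as the paper: identify ${\rm Spec}^{\infty}(A_X)$ with $\varprojlim_{R}{\rm Spec}^{\infty}(\mathbb{K}^{U(X/R)}) \cong \varprojlim_{R} X/R$, invoke \textbf{Lemma \ref{niedia}} to recover $X$, and deduce the three bullet points formally from the natural isomorphism $\epsilon$. The only difference is that the paper outsources the two key identifications (the spectrum of a finite power of a $\mathcal{C}^{\infty}$-field being the finite discrete set, and ${\rm Spec}^{\infty}$ converting the filtered colimit into a cofiltered limit) to Theorems 33 and 34 of \cite{berni2019topics}, whereas you sketch how you would prove them; your sketch, including the flagged obstacle about primes in a filtered colimit, is consistent with what those cited results provide.
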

		\begin{proof}

			By the \textbf{Theorem 34}, p. 118 of \cite{berni2019topics},
			
			$${\rm Spec}^{\infty}(A_X) \approx \varprojlim_{R \in \mathcal{R}}
			{\rm Spec}^{\infty}\,(\mathbb{K}^{U\left({X}/{R}\right)}).$$
			
			By the \textbf{Theorem 33}, p. 118 of \cite{berni2019topics},
			
			$${\rm Spec}^{\infty}\,(\mathbb{K}^{U\left({X}/{R}\right)})
			\approx  {X}/{R},$$
			so
			
			$$\varprojlim_{R \in \mathcal{R}} {\rm
				Spec}^{\infty}\,(\mathbb{K}^{U\left(\frac{X}{R}\right)}) \approx
			\varprojlim_{R \in \mathcal{R}} ({X}/{R})
			\approx X$$
			
			Since the homeomorphisms above are natural, just take $\epsilon_X : X
			\to {\rm Spec}^\infty(A_X)$ as the composition of these homeomorphisms. 
			
			In particular, ${\rm Spec}^{\infty}\,(A_X) \approx X$ and ${\rm Spec}^{\infty}$ is an isomorphism-dense functor.
			
			Let $\phi : X \to X'$ be a continuous function between Boolean spaces. Since  $\epsilon$ is a natural isomorphism, we have $\phi = \epsilon_{X'}^{-1} \circ {\rm Spec}^{\infty}(\widehat{k}(\phi)) \circ \epsilon_X$. In particular, there exists a homomorphism of  von
			Neumann regular $\mathcal{C}^{\infty}-$rings $f : A' \to A$ and homeomorphisms of Boolean spaces $\psi : X \to {\rm Spec}^{\infty}(A)$ and $\psi' : X' \to {\rm Spec}^{\infty}(A')$, such that
			$$ \phi = {\psi'}^{-1} \circ  {\rm Spec}^{\infty}(f) \circ \psi,$$
			thus ${\rm Spec}^{\infty}$ is a full up to conjugatization functor.
			
			Let $\phi, \psi : X \to X'$ be continuous functions between Boolean spaces such that $\widehat{k}(\phi) = \widehat{k}(\psi)$. Then 
			$$\phi = \epsilon_{X'}^{-1} \circ {\rm Spec}^{\infty}(\widehat{k}(\phi)) \circ \epsilon_X = \epsilon_{X'}^{-1} \circ {\rm Spec}^{\infty}(\widehat{k}(\psi)) \circ  \epsilon_X = \psi,$$
			thus $\widehat{k}$ is a faithful functor.
			
		\end{proof}

		\begin{theorem}\label{qmq} Let $\mathbb{K}$ be a $\mathcal{C}^{\infty}-$field.
			Defining the covariant functor (composition of contravariant functors):
			
			$$\check{K} = \widehat{k}\circ {\rm Stone} : {\rm \bf BA} \to
			\mathcal{C}^{\infty}{\rm \bf vNRing}.$$
			
			Then there is a natural isomorphism
			$$\theta : {\rm Id}_{\rm \bf BA} \overset{\cong}\Rightarrow \widetilde{B}
			\circ \check{K}.$$

			Therefore:
			\begin{itemize}
				\item{The functor $\check{K}$ is faithful;}
				\item{The functor $ \widetilde{B}$ is full up to conjugation;}
				\item{The functor $ \widetilde{B}: \mathcal{C}^{\infty}-\text{\bf vNRing} \rightarrow \text{\bf BA}$ is isomorphism-dense. In particular: given any $\mathcal{C}^{\infty}-$field $\mathbb{K}$ and any Boolean algebra $B$, there is a von
					Neumann regular $\mathcal{C}^{\infty}-$ring which is a $\mathbb{K}-$algebra,
					$\check{K}(B)$, such that $\widetilde{B}(\check{K}(B)) \cong B$.}
			\end{itemize}
		\end{theorem}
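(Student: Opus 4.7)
The plan is to assemble $\theta$ by splicing together three natural isomorphisms already established in the paper. Unpacking definitions, $\widetilde{B}\circ\check{K} = \widetilde{B}\circ\widehat{k}\circ{\rm Stone}$, so the strategy is to rewrite this composite step by step using known natural isomorphisms until it collapses to ${\rm Id}_{\rm \bf BA}$. The three ingredients are: (i) the natural isomorphism $\jmath:\widetilde{B}\Rightarrow{\rm Clopen}\circ{\rm Spec}^{\infty}$ on $\mathcal{C}^{\infty}{\rm \bf vNRing}$, from \textbf{Theorem \ref{uhum}} (whose componentwise bijectivity is \textbf{Theorem \ref{jordao}}); (ii) the natural isomorphism $\epsilon:{\rm Id}_{\rm \bf BoolSp}\Rightarrow{\rm Spec}^{\infty}\circ\widehat{k}$, from \textbf{Theorem \ref{boospvn}}; and (iii) the Stone-duality natural isomorphism $\alpha:{\rm Id}_{\rm \bf BA}\Rightarrow{\rm Clopen}\circ{\rm Stone}$ recorded in \textbf{Remark \ref{stone-dual}}.

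For a Boolean algebra $B$ I would set
$$\theta_B \; := \; \jmath_{\check{K}(B)}^{-1} \circ {\rm Clopen}(\epsilon_{{\rm Stone}(B)})^{-1} \circ \alpha_B \; : \; B \longrightarrow \widetilde{B}(\check{K}(B)),$$
which is an isomorphism as a composition of isomorphisms. Naturality of $\theta$ with respect to a morphism $h:B\to B'$ in ${\rm \bf BA}$ would be verified by stacking the three naturality squares of $\alpha$, of $\epsilon$ (transported across the contravariant functor ${\rm Clopen}$), and of $\jmath$ (evaluated at the $\mathcal{C}^{\infty}$-homomorphism $\check{K}(h)$); the outer rectangle then commutes because each inner square does. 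The only subtle point is bookkeeping with the two contravariant functors ${\rm Stone}$ and ${\rm Clopen}$, which flip arrow directions and turn the corresponding naturality identities into their inverse form; otherwise the verification is formal. I do not anticipate a serious obstacle, since all substantive work has already been discharged by \textbf{Theorems \ref{jordao}}, \textbf{\ref{uhum}} and \textbf{\ref{boospvn}}, together with Stone duality.

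The three enumerated consequences would then follow mechanically from $\theta$ being a natural isomorphism. For faithfulness of $\check{K}$: if $\check{K}(h)=\check{K}(h')$ then $\widetilde{B}(\check{K}(h))=\widetilde{B}(\check{K}(h'))$, and conjugating by $\theta$ via the naturality squares yields $h=h'$. For fullness of $\widetilde{B}$ up to conjugation: any $h:B\to B'$ can be written as $h = \theta_{B'}^{-1}\circ\widetilde{B}(\check{K}(h))\circ\theta_B$, exhibiting $h$ as conjugate, via the isomorphisms $\theta_B$ and $\theta_{B'}$, to the image of $\check{K}(h)$ under $\widetilde{B}$. For isomorphism-density of $\widetilde{B}$: for each $B$ the object $\check{K}(B)$ satisfies $\widetilde{B}(\check{K}(B))\cong B$ via $\theta_B^{-1}$; moreover $\check{K}(B) = \varinjlim_R \mathbb{K}^{U({\rm Stone}(B)/R)}$ is a filtered colimit (inside $\mathcal{C}^{\infty}{\rm \bf vNRing}$, by \textbf{Proposition \ref{proposition4}}) of $\mathbb{K}$-algebras, hence is itself canonically a $\mathbb{K}$-algebra, as asserted in the last bullet.
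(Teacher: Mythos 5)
Your proposal is correct and follows essentially the same route as the paper, whose proof is just the one-line instruction to combine Theorem \ref{boospvn}, Stone duality, Theorem \ref{uhum} and Theorem \ref{jordao}; you have simply made the composite $\theta_B = \jmath_{\check{K}(B)}^{-1}\circ {\rm Clopen}(\epsilon_{{\rm Stone}(B)})^{-1}\circ \alpha_B$ and the stacking of naturality squares explicit. The added observation that $\check{K}(B)$ is a $\mathbb{K}$-algebra because it is a filtered colimit of $\mathbb{K}$-algebras is a welcome clarification of a point the paper leaves implicit.
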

		
		\begin{proof} This follows directly by a combination of
			the \textbf{Theorem \ref{boospvn}} above,
			Stone duality (\textbf{Remark \ref{stone-dual}}), \textbf{Theorem
				\ref{uhum}} and \textbf{Theorem \ref{jordao}}.
		\end{proof}
		
		The diagram below summarizes the main functorial connections
		established in this section:

		$$\xymatrixcolsep{3pc}\xymatrix{
			& & \ar@2[d]^{\varepsilon}& & & & \\
			{\rm \bf BoolSp}\ar@/^3pc/[rrrrr]^{{\rm Id}_{\rm \bf BoolSp}}
			\ar[rr]^{\widehat{k}} & & \mathcal{C}^{\infty}{\rm \bf vNRing}
			\ar[rrr]^{{\rm Spec}^{\infty}}
			\ar[ddrrr]_{\widetilde{B}} & & & {\rm \bf BoolSp} \ar@<1ex>[dd]^{{\rm
					Clopen}}\\
			& & & & \ar@2[ur]^{\jmath} & \\
			{\rm \bf BA} \ar[uu]^{{\rm Stone}} \ar[uurr]_{\check{K}}
			\ar@/_/[rrrrr]_{{\rm Id}_{{\rm \bf BA}}} &  & \ar@2[uu]^{\theta} & & & {\rm \bf BA}}$$

		
				
				We finish this work with the following:
				
				

				
				\begin{remark}{ The theorem above leads us to the natural question(s): Is $\widetilde{B}$ an equivalence of categories, possibly with with $\check{K}$  being the quasi-inverse of $\widetilde{B}$, for some $\mathcal{C}^{\infty}-$field $\mathbb{K}$?}
					
					\begin{itemize}
						\item{For every $\mathcal{C}^{\infty}-$field $\mathbb{K}$ with $\operatorname{card}\,(\mathbb{K})> \operatorname{card}\,(\mathbb{R})$, $\check{K}$ {\bf can not be} a quasi-inverse of $\widetilde{B}$; in fact, since for every Boolean algebra $A$, $\check{K}(A)$ is in particular a $\mathbb{K}$-algebra, we have $\operatorname{card}(\check{K}(A)) \geq \operatorname{card}(\mathbb{K})$, whenever $A \ncong \{0\}$. Let $V$ be a von Neumann-regular $\mathcal{C}^{\infty}-$ring; since the  class of  $\mathcal{C}^{\infty}-$fields  is first-order axiomatizable in the language of $\mathcal{C}^{\infty}-$rings, and every $\mathcal{C}^{\infty}-$field is an infinite set (since it is, in particular, a non-trivial $\mathbb{R}$-algebra), then by the \textbf{L\"{o}wenheim-Skolem Theorem} (upward, which we denote by $\uparrow\operatorname{LS}$), there is a $\mathcal{C}^{\infty}-$field $\mathbb{K}$ such that $\operatorname{card}\,(\mathbb{K}) {>} \operatorname{card}\,(V)$. Therefore,  $\operatorname{card}\,(\check{K}(\widetilde{B}\,(V))) \geq \operatorname{card}\,(\mathbb{K}) \stackrel{\uparrow\operatorname{LS}}{>} \operatorname{card}\,(V)$ and, thus, $\check{K}(\widetilde{B}\,(V)) \not\cong V$. Since there exist non-trivial von Neumann-regular $\mathcal{C}^{\infty}-$rings $V$ with $\operatorname{card}\,(V) =  \operatorname{card}\,(\mathbb{R})$, this shows that there is no $\mathcal{C}^{\infty}-$field ,$\mathbb{K}$, with $\operatorname{card}\,(\mathbb{K})> \operatorname{card}\,(\mathbb{R})$, such that $\check{K}$ is a quasi-inverse of $\widetilde{B}$.}
						\item{It is important to stress that the functor $\widetilde{B}$ \textbf{is not} an equivalence of categories. {In fact, by $\uparrow\operatorname{LS}$, there are $\mathcal{C}^{\infty}-$fields, $\mathbb{K}$, with $\operatorname{card}\,(\mathbb{K})> \operatorname{card}\,(\mathbb{R})$. We saw above that $\widetilde{B}$  is a ``full up to conjugation functor'', but if $\widetilde{B}$ were a full functor,  then the Boolean algebra isomorphism $\widetilde{B}\,(\mathbb{K}) \cong {\bf 2} \stackrel{\operatorname{id}_{\bf 2}}{\to}   {\bf 2} \cong \widetilde{B}\,(\mathbb{R})$ should be the image of some $\mathcal{C}^{\infty}-$homomorphism $\mathbb{K} \rightarrow \mathbb{R}$: this is a contradiction since  a $\mathcal{C}^{\infty}-$homomorphism between $\mathcal{C}^{\infty}-$fields must be injective. Therefore $\widetilde{B}$ is not a full functor, thus $\widetilde{B}$ is  not an equivalence of categories. }}
					\end{itemize}
				\end{remark}


			\end{document}